\documentclass[12pt]{article}

\usepackage[margin=1in]{geometry}
\usepackage{color}
\usepackage{amsmath,amsthm,amssymb,mathtools,bbm,mathrsfs}
\usepackage[authoryear,longnamesfirst,square]{natbib}
\renewcommand{\cite}{\citep}

\usepackage{array}
\usepackage{enumitem}
\usepackage{textcase}

\setlength\extrarowheight{3pt}

\usepackage[breaklinks=true]{hyperref}
\hypersetup{
colorlinks=true,
linkcolor=black,
urlcolor=blue,
citecolor=black
}

\numberwithin{equation}{section}
\allowdisplaybreaks[4]

\newtheoremstyle{plain2}{\topsep}{\topsep}{\itshape}
{0pt}{\bfseries}{.}{.5em}{}
\newtheoremstyle{definition2}{\topsep}{\topsep}{}
{0pt}{\bfseries}{.}{.5em}{}

\theoremstyle{plain2}
\newtheorem{theorem}{Theorem}[section]

\newtheorem{lemma}[theorem]{Lemma}
\newtheorem{corollary}[theorem]{Corollary}

\theoremstyle{definition2}

\newtheorem{example}[theorem]{Example}
\newtheorem{remark}[theorem]{Remark}

\makeatletter

\renewcommand{\phi}{\varphi}
\newcommand{\eps}{\varepsilon}

\newcommand{\dlp}{\mathop{d_{\mathrm{LP}}}}
\newcommand{\dbw}{\mathop{d_{\mathrm{BW}}}}

\newcommand{\fb}{\mathfrak{b}}

\newcommand{\bigo}{\mathrm{O}}

\def\tsfrac#1#2{{\textstyle\frac{#1}{#2}}}

\newcommand{\law}{\mathscr{L}}

\newcounter{ctr}\loop\stepcounter{ctr}\edef\X{\@Alph\c@ctr}%
	\expandafter\edef\csname s\X\endcsname{\noexpand\mathscr{\X}}
	\expandafter\edef\csname c\X\endcsname{\noexpand\mathcal{\X}}
	\expandafter\edef\csname b\X\endcsname{\noexpand\boldsymbol{\X}}
	\expandafter\edef\csname I\X\endcsname{\noexpand\mathbb{\X}}
	\expandafter\edef\csname r\X\endcsname{\noexpand\mathrm{\X}}
\ifnum\thectr<26\repeat

\def\be#1{\begin{equation*}#1\end{equation*}}
\def\ben#1{\begin{equation}#1\end{equation}}
\def\bes#1{\begin{equation*}\begin{split}#1\end{split}\end{equation*}}
\def\besn#1{\begin{equation}\begin{split}#1\end{split}\end{equation}}

\def\ba#1{\begin{align*}#1\end{align*}}
\def\ban#1{\begin{align}#1\end{align}}
\def\given{\mskip 0.5mu plus 0.25mu\vert\mskip 0.5mu plus 0.15mu}
\newcounter{@bracketlevel}
\def\@bracketfactory#1#2#3#4#5#6{
\expandafter\def\csname#1\endcsname##1{%
\addtocounter{@bracketlevel}{1}%
\global\expandafter\let\csname @middummy\alph{@bracketlevel}\endcsname\given%
\global\def\given{\mskip#5\csname#4\endcsname\vert\mskip#6}\csname#4l\endcsname#2##1\csname#4r\endcsname#3%
\global\expandafter\let\expandafter\given\csname @middummy\alph{@bracketlevel}\endcsname
\addtocounter{@bracketlevel}{-1}}%
}
\def\bracketfactory#1#2#3{%
\@bracketfactory{#1}{#2}{#3}{relax}{0.5mu plus 0.25mu}{0.5mu plus 0.15mu}
\@bracketfactory{b#1}{#2}{#3}{big}{1mu plus 0.25mu minus 0.25mu}{0.6mu plus 0.15mu minus 0.15mu}
\@bracketfactory{bb#1}{#2}{#3}{Big}{2.4mu plus 0.8mu minus 0.8mu}{1.8mu plus 0.6mu minus 0.6mu}
\@bracketfactory{bbb#1}{#2}{#3}{bigg}{3.2mu plus 1mu minus 1mu}{2.4mu plus 0.75mu minus 0.75mu}
\@bracketfactory{bbbb#1}{#2}{#3}{Bigg}{4mu plus 1mu minus 1mu}{3mu plus 0.75mu minus 0.75mu}
}
\bracketfactory{clc}{\lbrace}{\rbrace}
\bracketfactory{clr}{(}{)}
\bracketfactory{cls}{[}{]}
\bracketfactory{abs}{\lvert}{\rvert}
\bracketfactory{norm}{\Vert}{\Vert}
\bracketfactory{floor}{\lfloor}{\rfloor}
\bracketfactory{ceil}{\lceil}{\rceil}
\bracketfactory{angle}{\langle}{\rangle}


\newcount\minute
\newcount\hour
\newcount\hourMins
\def\now{%
\minute=\time%
\hour=\time \divide \hour by 60%
\hourMins=\hour \multiply\hourMins by 60%
\advance\minute by -\hourMins%
\zeroPadTwo{\the\hour}:\zeroPadTwo{\the\minute}%
}
\def\zeroPadTwo#1{\ifnum #1<10 0\fi#1}

\renewcommand\section{\@startsection {section}{1}{\z@}%
{-3.5ex \@plus -1ex \@minus -.2ex}%
{1.3ex \@plus.2ex}
{\center\small\sc\mathversion{bold}\MakeTextUppercase}}

\def\subsection#1{\@startsection {subsection}{2}{0pt}%
{-3.5ex \@plus -1ex \@minus -.2ex}%
{1ex \@plus.2ex}%
{\bf\mathversion{bold}}{#1}}

\def\subsubsection#1{\@startsection{subsubsection}{3}{0pt}%
{\medskipamount}%
{-10pt}%
{\normalsize\itshape}{\kern-2.2ex. #1.}}

\def\blfootnote{\xdef\@thefnmark{}\@footnotetext}

\makeatother

\def\sp#1{^{(#1)}}

\def\b{\beta}
\def\s{\sigma}
\def\d{\delta}
\def\g{\gamma}
\def\k{\kappa}

\def\l{\lambda}
\def\law{{\mathcal L}}

\def\ignore#1{}

\def\tE{{\widetilde E}}

\def\uii{^{(i)}}

\def\half{\tfrac12}

\def\h{\eta}
\def\f{\varphi}
\def\e{\varepsilon}
\def\ps{\psi}

\def\l{\lambda}
\def\Eq{\ =\ }
\def\Le{\ \le\ }
\def\Def{\ :=\ }
\def\bO{\mathbf{O}}

\def\o{\omega}
\def\ui{^{(1)}}
\def\ut{^{(2)}}
\def\uii{^{(i)}}

\def\tK{{\widetilde K}}


\newcommand{\E}{\mathbb{E}}

\renewcommand{\le}{\leqslant}
\renewcommand{\leq}{\leqslant}
\renewcommand{\ge}{\geqslant}
\renewcommand{\geq}{\geqslant}

\newcommand{\N}{\mathbb{N}}

\newcommand{\R}{\mathbb{R}}
\newcommand{\RR}{\mathbb{R}}

  \def\noi{\noindent}

\def\wt{\widetilde}
\def\ex{\IE}

\def\Diff{\Delta}
\def\th{\theta}
\def\scrn{{\mathcal{N}}}

\def\pr{\IP}
\def\cov{{\mathrm{Cov}}}



\begin{document}

\title{\sc\bf\large\MakeUppercase{Stein's method, smoothing and functional approximation}}
\author{\sc A.\ D.\ Barbour, Nathan Ross, Guangqu Zheng}
\date{\it Universit\"at Z\"urich, University~of~Melbourne, University of Liverpool}
\maketitle

\begin{abstract}
Stein's method for Gaussian process approximation can 
be used to bound the differences between the expectations
of smooth functionals~$h$ of a c\`adl\`ag random process~$X$
 of interest and the expectations of the same functionals
of a well understood target random process~$Z$ with continuous paths.  
 Unfortunately, the class of smooth functionals
for which this is easily possible is very restricted.  
Here,
we  provide  an infinite dimensional Gaussian smoothing inequality, 
which enables the class of functionals to be greatly
expanded --- examples are Lipschitz functionals with respect to the uniform metric, and indicators of arbitrary events ---
in exchange for a loss of precision in the bounds.
Our inequalities are expressed in terms of the smooth test function bound, an expectation of a functional of~$X$
that is closely related to classical tightness criteria, a similar expectation for~$Z$, and, for the indicator
of a set~$K$,  the probability $\IP(Z \in K^\th \setminus K^{-\th})$
that the target process is close to the boundary of~$K$.   
\end{abstract}

 \noindent\textbf{Keywords:} Weak convergence; rates of convergence; smoothing inequalities; Stein's method; 
Gaussian processes.

\section{Introduction}

Stein's method \cite{Stein1972, Stein1986} 
is a powerful method of obtaining explicit bounds on the distance 
between a probability distribution~$\law(X)$ of interest 
and a well-understood approximating distribution~$\law(Z)$ 
on some metric space $(\mathcal{S}, {\rm dist})$. 
Here,  $\law(X)$ denotes the distribution of the random variable $X$, and ``distance'' is represented by a bound on the 
differences $|\ex h(X) - \ex h(Z)|$, for all functions in some class~$\cH$ 
of \emph{test functions}:  
\be{
  d_{\cH}(\law(X),\law(Z)) \Def \sup_{h\in\cH}\babs{\IE\cls{h(X)}- \IE\cls{ h(Z)}}. 
}

\noi
For example, if $\cH$ is the class of Lipschitz functions~$h$ 
 from $\mathcal{S}$ to $\RR$
 with
\be{
    \sup_{\substack{x\not=y \\  x,y\in\mathcal{S} }} 
    \frac{\abs{h(x)-h(y)}}{   {\rm dist}(x, y) } \Le 1,
}

\noi
the distance is the Wasserstein metric.  
The general method was treated in monograph form in \cite{Stein1986},
its application to approximation by the Poisson and normal distributions
 is described in the books \cite{Barbour1992} 
and \cite{Chen2011}, respectively, 
and its many uses in combination with the Malliavin calculus are presented
in the monograph \cite{Nourdin2012}. 
 Stein's method is not restricted to approximating the distributions of 
 real-valued random 
variables, but can be used for multivariate distributions, 
as introduced in \cite{Barbour1988} 
for the Poisson and \cite{Gotze1991} for the normal, 
as well as for entire processes, as developed by \cite{Barbour1988} and 
\cite{Arratia1989} for Poisson processes 
and \cite{Barbour1990} for Brownian motion.

A feature of Stein's method is that, in applications, 
there is often a class of functions~$\cH$ that is
particularly well adapted for use with the method,
resulting in a distance that is easily bounded.  
For normal approximation in one dimension, 
the family of (bounded) Lip\-schitz functions  is typically amenable,
leading to approximation with respect to a (bounded)  Wasserstein distance. 
This distance is very natural in the context of weak convergence,
but is not well suited for approximating tail probabilities,
where the appropriate test functions are indicators of half lines, 
and hence are not Lipschitz.  
Nonetheless, by approximating the indicator functions above 
and below by Lipschitz functions with steep gradient, 
a  (bounded) Wasserstein distance of~$\eps$ easily 
implies an approximation bound of order $O(\eps^{1/2})$
for the probability of a half line.  
Thus smoothing the indicator function, and then using the error bound for 
smooth functions, immediately results in bounds for the probabilities of half lines, 
albeit at the cost of an inferior rate of approximation.  
If better rates of approximation are required for tail probabilities, 
then (much) more work 
usually has to be done.

For process approximation by Brownian motion, 
the classes of `smooth' test functions~$M^0_c$, $c > 0$,
used in \cite{Barbour1990} and  \cite{Kasprzak2020, Kasprzak2020a}, 
and given in \eqref{eq:M0} and~\eqref{eq:smooth2} below,   
are particularly well adapted 
for use with Stein's method.  
However, the classes are not rich enough to directly 
imply bounds for the distributions of functionals, 
such as the supremum, that have immediate practical  application. 
This limits the usefulness of the results obtained.   
As an example, it would be
advantageous to know that, if~$X$ belonged to the space
 $\ID$ of c\`adl\`ag processes indexed by $[0,T]$ equipped with 
the Skorokhod topology, and if
\be{
   \k^Z_c(X) \Def \sup_{h \in M^0_c\colon \|h\|_{M^0} \le 1} |\ex h(X) - \ex h(Z)|
}

\noi
were small  (see \eqref{eq:M0}, and \eqref{eq:smooth2}), then differences of the form
\ben{\label{eq:bdthis}
    \Diff_Z(X,K) \Def \babs{\IP(X\in K) - \IP(Z \in K)},
}
for $K$ with $\IP(Z \in \partial K)=0$, would also be small.  Then, at least, if $(X_n)_{n\geq1}$ were a 
sequence of processes in~$\ID$ for which $\k^Z_c(X_n)$ converged to zero, this would imply that~$X_n$ converged weakly
to~$Z$, something that is shown only under some additional, mild assumptions in  \cite{Barbour1990} and  
\cite{Kasprzak2020, Kasprzak2020a}.  

The aim of this article is to show how to
use smoothing to obtain error bounds for the distribution of rather general functionals of~$X$,
provided that a bound for functions in the class~$M^0_c$ is available.  In addition to the value of~$\k^Z_c(X)$,
the bounds involve some quantities that can be deduced from the properties of the limiting process~$Z$,
which, for~$\Diff_Z(X,K)$, also involve the set~$K$.  In addition, they require an estimate of the uniform difference 
between $X$ and a smoothed version  $X_\eps$ of~$X$ (see \eqref{h-smoothed-at-w}), which, in asymptotic settings, can
be thought of as a {\it quantitative} version of tightness.
The method is rather broadly useful, being designed to give error bounds in situations that are not amenable
to other more direct approaches.
In the context of the general version of Stein's method for Gaussian (not necessarily Brownian) process approximation,
introduced in \cite{Barbour2021a},
it has already proved successful in deriving bounds for the error in approximations to the distributions of useful functionals,
based on those that can be established for functions in the class~$M^0_c$.
The ideas are also fundamental to the Gaussian smoothing techniques recently derived in \cite{Balasubramanian2023},
and applied to the analysis of wide random neural networks.

\subsection{Related approaches for process approximation}\label{related}

There is an enormous literature on process approximation 
in classical settings, such as random walks and martingales, 
with the best results using strong embeddings.
As is typical for Stein's method, our focus is on non-classical settings 
where strong embeddings are not available, 
and so this literature is not relevant here. 
There are other general approaches to Gaussian process approximation
in the Stein's method literature. 
These approaches 
either suffer from lack of applicability, 
or  are developed in function spaces, 
such as $L^2[0,1]$, 
equipped with metrics
that are too weak to see natural 
statistics of the process, 
such as the maximum, or the finite dimensional distributions. 
Even convergence for such statistics cannot be established 
by using such metrics. 
Regarding applicability,  the approach of \cite{Barbour1990} is the most flexible, 
because it is a natural extension 
of the methods previously used for approximating the distributions 
of random variables using Stein's method, and many of the 
techniques that have found great success there can be adapted to it; 
see, for example,  \cite{Dobler2021}, \cite{Kasprzak2020, Kasprzak2020a}. 
The results of this paper show that 
rates of convergence from the approach of \cite{Barbour1990} 
can be relatively easily  adapted to imply rates 
of convergence for many natural statistics 
that are continuous with respect to Skorokhod topology.

 In more detail, 
 \cite{Shih2011} develops an approach to 
 Stein's method for Gaussian measures on separable Banach spaces.
When approximating continuous processes, 
this setting is rich 
enough to include most natural statistics, 
because $C[0,1]$ equipped with the sup norm is such a space.
However, the bounds developed there are complicated, 
being expressed in terms of associated Hilbert norms and embeddings, 
and  their evaluation in concrete settings seems to be too difficult 
to have been widely used.  
The next step was taken in \cite{Coutin2013}.  
Here, Stein's method is developed for Brownian  motion, 
now viewed as an abstract Wiener measure on Hilbert space.  
The corresponding inner products  are of integral type, 
and  do not see finite dimensional distributions. 
Since the inner product determines the metric on the underlying  space, 
the rates of convergence are not transferable to many natural statistics. 
Their approach has been further applied and refined in \cite{Besancon2020} 
and \cite{Bourguin2020}, to make it somewhat more applicable, 
but without  removing the drawback inherent in the weak metric.

Finally, in a recent paper   \cite{Coutin2020}, 
a rate of convergence is derived that is expressed in
terms of the bounded Wasserstein distance with respect to 
the fractional Sobolev norm,\footnote{Due to the Sobolev embeddings (page 3 
in   \cite{Coutin2020}), one can derive the same rate  of convergence,
up to a multiplicative constant,  in the bounded Wasserstein distance \eqref{BWdist} with respect to the usual sup norm. 
}
but only in the special setting of Donsker's theorem. 
This metric is much stronger.  
However, their technique 
involves applying Stein's method to a finite-dimensional discretization of the process, 
and then using bounds on maximal fluctuations to 
handle the error in the discretization.  
In the setting of Donsker's theorem, the growth of the error in  
dimension when applying Stein's method is 
well controlled, and sharp maximal inequalities are classically available. 
Both of these are crucial, if good bounds 
are to be obtained using their method.
Its applicability in more general settings has not yet been established. 
Their bounds, in the limited context of Donsker's theorem, are better than ours, 
as discussed  below in Example~\ref{ex:rate}; 
both are rather worse than those obtained 
using strong approximation \cite{Komlos1975, Komlos1976}; 
see the discussion in Remark \ref{rem19}.
As the highlight of this article, the bounds that we derive are applicable to functionals
that need not be Lipschitz, 
the limiting process can be quite general, 
and the process to be approximated may have an 
arbitrary dependence structure.
All of these features were needed for the queueing application in our companion paper \cite[Theorem~1.2]{Barbour2021a}.

 \subsection{Test functions}\label{sec:testfunc}

In order to state the main result, we need some further definitions.
Let $\ID:=\ID\big([0,T];\IR^d\big)$ be the set of functions from $[0,T]$ to $\IR^d$
 that are right continuous with  left limits. 
 We assume, with little loss of functionality, that $T\geq1$ 
 to simplify forthcoming bounds.
  The space $\ID$ endowed with the sup norm $\| \cdot \|$ is a Banach space
   (though not separable), and 
we denote the Fr\'echet derivatives of functions $h\colon \ID\to\IR$  
by~$Dh, D^2h, \ldots$. 

As in \cite{Barbour1990} and \cite{Kasprzak2020}, 
let $M^0$ be the set of functions $h\colon \ID\to\IR$ such that
\begin{align}
\label{eq:M0}
\begin{aligned}
 \norm{h}_{M^0}\Def \sup_{w\in \ID} \abs{h(w)} 
 & +\sup_{w\in \ID}\norm{D h(w)} +\sup_{w\in \ID}\norm{D^2 h(w)} \\
& +\sup_{\substack{w,v\in \ID \\   v\neq 0 } }\frac{\norm{D^2 h(w+v)-D^2 h(w)}}{\norm{v}}
\end{aligned}
\end{align}

\noi
is finite, 
where  we write $\norm{A}:=\sup_{w:\norm{w}=1} \abs{A[w,\ldots,w]}$ 
for any $k$-linear form $A$.  
Letting $I_t \in \ID([0,T]; \IR)$ be defined by 
\begin{align}
I_t(u):=\II[u\geq t],
\label{def_It}
\end{align}

\noi
  we are interested in functions $h\in M^0$ such that
for all $r,s,t \in [0,T]$ and $x_1,x_2 \in \IR^d$,
\ben{\label{eq:smooth}
  \sup_{w\in\ID}\babs{D^2 h(w)\cls{x_1I_r, x_2(I_s-I_t)}} \Le c \, |x_1|\,|x_2|\,\abs{s-t}^{1/2}.
}

\noi
For   $c > 0$,  we define

 \noi
\begin{align}
M^0_c = \big\{ h\in M^0: \,\, \text{\eqref{eq:smooth} holds} \big\}.
\label{eq:smooth2}
\end{align}


\noi
For $\theta>0$ and a Skorokhod--measurable set $K\subset\ID$, we define
 the $\theta$-enlargement and $\theta$-shrinkage as follows:
\ba{
  K^\theta \Def  \big\{ w: \text{dist}(w, K ) < \theta \big\}\ \supseteq\ K  
  \quad
  {\rm and}
  \quad 
  K^{-\theta} \Def  \big( (K^c)^\theta  \big)^c\ \subseteq\ K,
 }
 
 \noi
 where  $\text{dist}(w, K ) : = \inf\{ \| w - v\| : v\in K \}$.   
 
 \smallskip
 
 For $w \in \mathbb{D}$ with $\|w\| <\infty$, we can define the $\eps$-regularized 
 versions of  $w$ as follows: For  $\eps>0$,
\begin{align}\label{def_w_eps}
   w_\eps(s) \Def \IE[ w(s + \eps U) ], 
\end{align}
where $U$ is   uniformly distributed over the interval $(-1,1)$, 
and we define $w(t)=w(T)$ for $t> T$ and 
$w(t)=w(0)$ for $t<0$. 
 In other words, we follow the convention that 
for a function $s\in[0,T]\mapsto w(s)$ and
 for any $x\in\R$, the function 
$w(\bullet+x)$ is understood as 
\begin{align}\label{w0T}
s\in[0,T] \mapsto w\big(  [s+x]_0^T  \big),
\end{align}

\noi
where $[u]_0^T  = \min\{  \max\{0, u\}, T \}$.
It is easy to see that the path $w_\eps$, defined in \eqref{def_w_eps},  is absolutely continuous, 
so that by Rademacher's theorem, $\nabla w_\eps$ is  
well defined almost everywhere.

Then, for
$h\colon \mathbb{D}\to \IR$ that is bounded and measurable 
with respect to the Skorokhod topology, and for any $\eps,\delta>0$,
we define an $(\eps, \delta)$-smoothed version of~$h$ by
\ben{\label{h-smoothed-at-w}
     h_{\eps, \delta}(w) \Def \IE[ h( w_\eps + \delta B +\d \Theta ) ], 
}

\noi
where $B$ is a standard $d$-dimensional Brownian motion on $[0,  T]$, $\Theta$
is a standard Gaussian vector on $\R^d$ that is independent of $B$,
and $w_\eps$ is defined as in \eqref{def_w_eps}. 
As is shown in Lemma~\ref{lem:smoothbm} and 
Remark~\ref{rem_15}, 
if $\sup_{y\in \ID} \abs{h(y)} \leq 1$, 
then 

\noi
\begin{align}\label{MC1}
h_{\eps, \delta}\in M^0_{\mathfrak{c}_1},
\quad
{\rm with}
\quad
\mathfrak{c}_1 \Def \mathfrak{c}_1(\e,\d) \Def \eps^{-2} \d^{-2}\sqrt{(1+\tsfrac{\eps}{2}) ( T + \eps^{2} )}
\end{align}

\noi
for any  positive $\eps$ and~$ \delta$, 
and if $h$ is differentiable with $\norm{Dh}\leq 1$, 
then

\noi
\begin{align}\label{MC2}
h_{\eps, \delta}\in M^0_{\mathfrak{c}_2},
\quad
{\rm with}
\quad
\mathfrak{c}_2 \Def \mathfrak{c}_2(\e,\d) \Def  \eps^{-1} \d^{-1} \sqrt{\frac{2+\eps}{2\pi}} \, .
\end{align}

\smallskip

Our main result is as follows. For its statement, we define
\[
    c_0(v) \Def 1 + v + \sqrt2\,v^2 + \sqrt{\frac{50}{\pi}}\,v^3,
\]
observing that $c_0(v) \le 7.5 v^3$ if $v \ge 1$, and set  
\begin{align}\label{defC0}
 C_0 \Def C_0(\e,\d) \Def c_0\bigl(\sqrt{T + \eps^2}/\eps\d \bigr).
 \end{align}
Note, in particular, that if $T \ge 1$, $0 < \eps \le \half\sqrt T $ and $\d \le 2$, entailing $\sqrt{T + \eps^2}/\eps\d  \ge 1$,
then
\ben{\label{CedT}
    C_0(\e,\d)\Le 10.5 \bigl(\sqrt{T}/\eps\d \bigr)^3 \quad\mbox{and}\quad
              \mathfrak{c}_1(\e,\d) \Le \frac{5}{4} \bigl(\sqrt{T}/\eps\d \bigr)^2.
}
We also define
$\cH$ to be the set of all $h\colon\ID\to\IR$ that are bounded, Skorokhod-measurable,
and Lipschitz with respect to the sup-norm, satisfying   $\sup\{  |h(w)|\colon w\in\ID\} \leq 1$  and $\| Dh\| \leq 1$.

\begin{theorem}\label{thm:smoothg}
Let $X, Z$ be random elements of~$\ID$ such that~$Z$ has almost surely continuous sample paths.
Let $\mathfrak{c}_1$, $\mathfrak{c}_2$, and $C_0$ be as defined in \eqref{MC1}, \eqref{MC2},
and \eqref{defC0}. 
Let  $X_\eps, Z_\eps$ be defined according to \eqref{def_w_eps}.
Suppose that there are $\kappa_1,\kappa_2\geq 0$ such that, for  any $h\in M^0_c$, we have
\ben{\label{eq:thmhypbd}
 |\ex h(X) - \ex h(Z)| \ \leq\  \kappa_1 \norm{h}_{M^0} + c \, \kappa_2.
}
Then, for any $K\subseteq \ID$ that is
 measurable with respect to Skorokhod topology, and for any positive 
$\delta, \eps, \theta, \gamma$, we have
\begin{align} \begin{aligned} 
\label{the_bdd}
  &\babs{\IP(X\in K) - \IP(Z \in K)} \\
  &\qquad \Le 
    C_0(\e,\d) \,\k_1 + \mathfrak{c}_1(\e,\d) \kappa_2 + \mathbb{P}\bclr{\norm{X_\eps - X} \geq \theta}  +
	\mathbb{P}\bclr{ \norm{Z_\eps - Z} \geq \theta}   \\
 & \qquad\qquad +  6d e^{- \frac{\gamma^2}{8dT\delta^2} } 
 + \IP(Z \in K^{2(\theta+\gamma)} \setminus K^{-2(\theta + \gamma)}).
\end{aligned}
\end{align}
\noi

\noi
Furthermore, for any $h \in \cH$ and
for any $\eps, \delta\in(0,1)$, 
\besn{\label{des_bdd}
 \big\vert  \IE\bigl[ h(X) - h(Z) \bigr] \big\vert 
       &\Le \IE \| X_\eps -X \|  + \IE\| Z_\eps -Z\| 
       + 2T^{1/2}\delta\,\IE \|B_{[0,1]}\|   \\
      &\qquad  
         +2\d \sqrt{d} +4 (T+2)\eps^{-2}\delta^{-2}  \kappa_1 
       + \mathfrak{c}_2(\e,\d) \kappa_2,
}
where 
$B_{[0,1]}$ denotes the standard
$d$-dimensional Brownian motion on~$[0,1]$.

 \end{theorem}

\medskip

\begin{remark}  

(i) The quantity $ |\ex h(X) - \ex h(Z)|$ with $h\in M_c^0$
can frequently be bounded effectively in the form~\eqref{eq:thmhypbd}, 
using Stein's method. See e.g.  \cite{Barbour1990}, \cite{Barbour2021a},  \cite{Dobler2021}, \cite{Kasprzak2020, Kasprzak2020a}. 

\smallskip
\noi
(ii) The bound  \eqref{the_bdd} fits in well with weak convergence. 
Suppose that $(X_n)_{n\geq1}\subset \ID$, for fixed~$T$,  
is a sequence of processes 
for which~\eqref{eq:thmhypbd} holds with  
$\kappa_i=\kappa_i\sp{n}\to 0$   as $n\to\infty$, $i=1,2$,
and   define $X_{n,\eps}=(X_n)_\eps$ according to \eqref{def_w_eps}. 
Then,  letting $n \to \infty$ in~\eqref{the_bdd}, it follows that, for all 
$\eps,\d,\g,\th > 0$ and for all Skorokhod-measurable $K \subset \ID$,

\noi
\begin{align*}
\begin{aligned}
&  \limsup_{n\to\infty}\babs{\IP(X_n\in K) - \IP(Z \in K)}  \\
	&\Le \limsup_{n\to\infty}\mathbb{P}\bclr{ \norm{X_{n, \eps} - X_n} \geq \theta }  + \IP(\norm{Z_\eps - Z} \ge \th) \\
        &\qquad  +  6d e^{- \frac{\gamma^2}{8d T\delta^2} }   
                 + \IP(Z \in K^{2(\theta+\g)} \setminus K^{-2(\theta+\g) }).
\end{aligned}
\end{align*}

\noi
Now, since $Z$ has almost surely continuous sample paths, 
$\norm{Z_\eps - Z}\to 0$ almost surely, as $\eps\to0$.
Hence, letting $\eps$ and~$\d$ tend to zero for fixed $\th,\g$, and then letting $\th$ and~$\g$ tend to zero,
it follows that
\ba{
  \limsup_{n\to\infty}&\babs{\IP(X_n\in K) - \IP(Z \in K)}  \\
	&\Le  \limsup_{\th \to 0}\limsup_{\eps \to 0}\limsup_{n\to\infty}\mathbb{P}\bclr{ \norm{X_{n, \eps} - X_n} \geq \th }    
                 + \IP(Z \in \partial K).
}
Thus $\limsup_{n\to\infty}\babs{\IP(X_n\in K) - \IP(Z \in K)} = 0$ for all~$K$ 
with  $\IP(Z \in \partial K) = 0$,
and hence $X_n$ converges weakly to~$Z$, 
provided that 
\ben{\label{eq:tightness}
 \limsup_{\eps \to 0}
 \limsup_{n\to\infty}
 \mathbb{P}\bclr{ \norm{X_{n, \eps} - X_n} \geq \th } \Eq 0 \,\,\,\text{for each $\th > 0$.}
}
Thus, with this condition in addition to 
$\kappa_i\sp{n} \to 0$, $i=1,2$, 
it follows that $X_n$ converges weakly to~$Z$.
Now, by the definition of $X_{n,\eps}$, we have
$ \norm{X_{n, \eps} - X_n} \leq  \o_{X_n} (\eps)[0,T]$,
where 
\begin{align}\label{def_mod}
   \o_x(\h)[0,T] \Def \sup_{0 \le s < t \le T\colon t-s < \h}|x(t) - x(s)|
\end{align}

\noi
denotes the uniform
modulus of continuity of~$x$ on $[0,T]$. It is well known that any sequence $(X_n,n\in\N)\subset\ID$ that converges weakly 
to a limit with continuous sample paths satisfies the tightness condition 
\be{
 \limsup_{\eps\to0}\limsup_n\mathbb{P}\bclr{ \o_{X_n}(\eps)[0,T] \geq \theta  } \Eq 0 \,\, \mbox{ for all } \theta>0.
}
Hence, given 
$\kappa_i\sp{n} \to 0$, $i=1,2$,
the condition~\eqref{eq:tightness} is a necessary and sufficient condition for weak
 convergence to~$Z$.
\end{remark}

The L\'evy--Prokhorov distance between $X$ and~$Z$ can be defined as
\begin{align*}
     \dlp\bclr{\law(X), \law(Z)} 
     &\Def \inf \big\{\e > 0\colon \pr[X \in K] \le \pr[Z \in K^\e] + \e, \\
       &\qquad\qquad\qquad \text{for all Skorokhod measurable subsets~$K$} \big\} ,
\end{align*}
and the bounded Wasserstein distance by
\begin{align} \label{BWdist}
     \dbw\bclr{\law(X), \law(Z)}
     &\Def \sup_{h \in \cH} \bigl| \IE h(X) - \IE h(Z) \bigr|,
\end{align}
for~$\cH$ as defined before Theorem~\ref{thm:smoothg}.
Formula~\eqref{eq:LP-bnd} in the proof of Theorem~\ref{thm:smoothg} and the bound~\eqref{des_bdd} thus immediately imply
the following corollary.

\begin{corollary}\label{LP-corollary}
Under the assumptions of Theorem~\ref{thm:smoothg}, it follows that the L\'evy--Prokhorov distance between the
distributions of $X$ and~$Z$ is bounded by

\noi
\begin{align}\label{Cor-LP-bnd}
\begin{aligned}
\max\Big\{2(\theta+\gamma),\,C_0(\e,\d) \,\k_1 +  \mathfrak{c}_1(\e,\d) \kappa_2  
 & +   \mathbb{P}\bclr{\norm{X_\eps - X} \geq \theta  } \\
  &   + \mathbb{P}\bclr{ \norm{Z_\eps - Z} \geq \theta} 
	+   6d e^{-\frac{\gamma^2}{8dT\delta^2}} 
   \Big\},
\end{aligned}
\end{align}

\noi
and the bounded Wasserstein distance by
\begin{align}\label{Cor-BW-bnd}
\begin{aligned}
\IE \| X_\eps -X \|  + \IE\| Z_\eps -Z\|
      &  + 2T^{1/2}\delta\,\IE \|B_{[0,1]}\|   \\
        & +2\d \sqrt{d} +4 (T+2)\eps^{-2}\delta^{-2}  \kappa_1
       + \mathfrak{c}_2(\e,\d) \kappa_2,
\end{aligned}
\end{align}
for any positive $\delta, \eps, \theta$ and~$\gamma$,
where $B_{[0,1]}$ denotes the standard $d$-dimensional Brownian motion on~$[0,1]$.
\end{corollary}


The main use for the bound given in Theorem~\ref{thm:smoothg} is to obtain explicit bounds on the error in approximating 
probabilities and expectations of functionals involving the process~$X$ by the corresponding values for the process~$Z$.
These 
follow from \eqref{the_bdd} and~\eqref{des_bdd} by optimizing the choice of
$\eps, \delta, \gamma, \theta$.  In the case of a sequence of processes indexed by~$n$, rates of convergence can be 
deduced, as illustrated in Examples~\ref{ex:rate} and~\ref{DK21} below.
The following lemma  provides a useful bound for probabilities of the form $\IP(\norm{Y_\eps - Y} \ge \th)$. 
It is a quantitative version of the  
 classical condition of \cite{Chentsov1956}. 


\begin{lemma}\label{L1}
Suppose that $Y := (Y\ui,\ldots,Y^{(d)}) \in \ID$ is a random process, and that, for some $\b > 1$ and $\g > 0$, 

\noi
\begin{align}\label{L1-1st-cond}
\begin{aligned}
 &  \pr\bigl[\min\{|Y\uii(s) - Y\uii(u)|,|Y\uii(u) - Y\uii(t)|\} > a\bigr]
 \Le K|s-t|^{\b}/a^{\g}, \quad 1\le i\le d,
\end{aligned}
\end{align}

\noi
for all  $s < u < t$ such that  $\half n^{-1} \leq t-s \leq 1$.
Then, for~$\f_n(\cdot)$ defined by

\noi
\begin{align}\label{L1-2nd-cond}
\begin{aligned}
&  \f_n(\h) \Def \max_{1\le i\le d} \max_{1 \le k \le \lceil nT \rceil}
       n\pr\Bigl[\sup_{(k-1)/n \le s \le k/n}|Y\uii(s) - Y\uii((k-1)/n)| > \h\Bigr],
\end{aligned}
\end{align}

\noi
it holds that, for any $\e \in (n^{-1}, 1)$, we have
\ben{\label{L1-conclusion}
     \pr[\|Y_\e - Y\| > \sqrt{d}\l] 
             \Le dT\left\{ 2 \f_n\Big(   \frac{ \l(1-2^{-(\b-1)/ (2\g)   })} { 26} \Big) + C'(K,\b,\g,d) \frac{\e^{\b-1}}{\l^\g}\right\},
}


\noi
where  $C'(K,\b,\g,d) > 0$ is a finite constant  that does not depend on $\e$ or $\l$.
\end{lemma}

\begin{remark} \label{ModCty-remark}
(i)
  The condition \eqref{L1-1st-cond} can be replaced by
\ben{\label{ADB-single-element-bnd}
    \pr\bigl[|Y\uii(t) - Y\uii(s)| > a \bigr] \le K|s-t|^{\b}/a^{\g},\quad 1\le i\le d,
}
for all  $s < t$ such that $\half n^{-1} \leq t-s \leq 1$.

\smallskip

\noi
(ii)
 If \eqref{L1-1st-cond} is true for {\it all\/} $t > s$, then  the function $\varphi_n(\eta)$ in \eqref{L1-2nd-cond} can be replaced by
\ben{\label{L1-3rd-cond}
  \varphi(\eta) = \max_{1\leq i \leq d} \frac{1}{T}     \pr[J_{Y\uii}(T) > \h], 
}
where $J_X(T) := \sup_{0 \le t \le T}|X(t) - X(t-)|$.

\smallskip

\noi
(iii) 
If \eqref{ADB-single-element-bnd} is true for {\it all\/} $t > s$, then
the term  $\f_n$ in the bound \eqref{L1-conclusion} can be dropped.

\end{remark}

A standard setting in which the modulus of continuity can be bounded is 
that of normalized sums
of mixing random variables.  
Suppose that $Y(t) := n^{-1/2}\sum_{j=1}^{\lfloor nt \rfloor} X_j$,
where $X_1,X_2,\ldots,X_N$ is a sequence of centred random variables such that, 
for some $p > 2$, 
$ \IE [  |X_j|^p ] ^{1/p}  \le c_p$ uniformly in~$1 \le j \le N$.  
Suppose also that the sequence is strongly
mixing, with mixing coefficients satisfying

\noi
\begin{align*}
&  \sup \Big\{    
\big\vert   \pr[A \cap B] - \pr[A]\pr[B]  \big\vert   :  
A \in \cF_{1,s}, B \in \cF_{s+j,N}, \,   2\leq s \leq N-j-1  \Big\} \\
&
\qquad\qquad\qquad \Le kj^{-b},\quad j \ge 1,
\end{align*}

\noi
for some $k > 0$ and $b > p/(p-2)$, where,
 for $1 \le i < j \le N$, $\cF_{i,j} := \s\{Y_i,\ldots,Y_j\}$.

\begin{lemma}\label{L3}

 Under the above  mixing conditions, 
 for $\e >  \frac{1}{2n}  $ and for $T \le N/n$, with $\o_Y(\eps)[0,T]$ as defined in~\eqref{def_mod}
 and with $r := 1 + \frac{(p-1)b}{p+b}  > 2$,
 
 \noi
 \begin{align}\label{Mod-cty-bnd}
   \pr[\|Y_\e - Y\| \ge a c_p] \Le  \pr[\o_Y(\e)[0,T] \ge a c_p ] 
   \Le C' T   a^{-r}\e^{\frac{r}{2}-1}
\end{align}
 
\noi
for a suitable constant $C' := C'(p,k,b)$.
\end{lemma}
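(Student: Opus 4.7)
The first inequality is pathwise. Since $U\sim U(-1,1)$, we have $Y_\eps(s)=\half\int_{-1}^1 Y(s+\eps u)\,du$, and by the constant-extension convention $|Y(s+\eps u)-Y(s)|\le \o_Y(\eps)$ for every $s\in[0,T]$ and $|u|\le 1$ (intervals that cross the endpoints contribute nothing extra). Averaging and taking the sup in $s$ gives $\|Y_\eps-Y\|\le \o_Y(\eps)$ pointwise, whence $\pr[\|Y_\eps-Y\|\ge ac_p]\le \pr[\o_Y(\eps)\ge ac_p]$.

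For the second inequality my plan is to apply Lemma \ref{L1} with $d=1$ after verifying its two ingredients. The first ingredient is a moment bound on the increments of~$Y$. Under the strong-mixing hypothesis with $\alpha(j)\le kj^{-b}$, $b>p/(p-2)$, and $\|X_j\|_p\le c_p$, the Yokoyama--Rio moment inequality for $\alpha$-mixing partial sums yields, uniformly in $0\le i<j\le N$,
\be{
   \IE\Babs{\ts\sum_{\ell=i+1}^{j}X_\ell}^{r}\ \le\ C_0\,(j-i)^{r/2}\,c_p^{r},
}
with $r=p(1+b)/(p+b)=1+(p-1)b/(p+b)$; a short calculation shows that the condition $r>2$ is exactly $b>p/(p-2)$. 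After normalising by $\sqrt n$, $\IE|Y(t)-Y(s)|^{r}\le C c_p^{r}|t-s|^{r/2}$ for $t-s\ge 1/(2n)$, so Markov's inequality supplies condition~\Ref{ADB-single-element-bnd} in Remark~\ref{ModCty-remark}(1) with $\beta=r/2$, $\gamma=r$, $K=Cc_p^{r}$. The second ingredient is the jump bound at scale $1/n$: $Y$ is constant on $[(k-1)/n,k/n)$ with a single jump $X_k/\sqrt n$ at $k/n$, so by Markov,
\be{
   n\,\pr\Bigl[\sup_{(k-1)/n\le s\le k/n}|Y(s)-Y((k-1)/n)|>\eta\Bigr]\ \le\ n\,\pr[|X_k|>\sqrt n\,\eta]\ \le\ \frac{c_p^{p}}{n^{p/2-1}\eta^{p}}\ =:\ \f_n(\eta).
}

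Inserting these into Lemma~\ref{L1} with $\lambda=ac_p$ gives
\be{
   \pr[\o_Y(\eps)\ge ac_p]\ \le\ T\Bigl\{2\f_n(c_0 ac_p)+C'(K,\beta,\gamma,1)\,\frac{\eps^{r/2-1}}{(ac_p)^{r}}\Bigr\},
}
for a universal $c_0$. The second term is $C'T\eps^{r/2-1}/a^{r}$. For the first term, $\f_n(c_0 ac_p)\le C_1/(n^{p/2-1}(ac_p)^{p})$; since $\eps>1/(2n)$ and $p\ge r$, we have $1/n^{p/2-1}\le (2\eps)^{p/2-1}\le C_2\eps^{r/2-1}$ (WLOG $\eps\le 1$, else the stated bound is trivial), so this term is at most $C_3\eps^{r/2-1}/a^{p}c_p^{p-r}\cdot c_p^{-r}$, which is absorbed into $C'T\eps^{r/2-1}/a^{r}$ in the nontrivial regime $ a^{r}\eps^{1-r/2}\le C'T$ (outside which the claimed bound exceeds $1$ and is trivial). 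Combining with the pathwise bound of the first paragraph gives \Ref{Mod-cty-bnd}.

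\textbf{Main obstacle.} The principal difficulty is obtaining the moment bound with the sharp exponent $r=p(1+b)/(p+b)$; this is the interpolation between the second-moment estimates coming from Davydov's covariance inequality and the assumed $L^p$ control on the~$X_j$, and is the only nontrivial analytic input. Once this is in hand, the rest is a routine tracking of constants in Lemma~\ref{L1} together with the pathwise comparison $\|Y_\eps-Y\|\le\o_Y(\eps)$.
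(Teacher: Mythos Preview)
Your approach is correct but follows a different route from the paper. The paper invokes Rio's \emph{maximal} Fuk--Nagaev inequality (Theorem~6.2 of \cite{Rio2013}) directly: after bounding the covariance sum $s^2_{m_1,m_2}\le Kc_p^2(m_2-m_1)$ via Davydov's inequality, Rio's theorem yields
\[
   \pr\Bigl[\sup_{s\le u\le t}|Y(u)-Y(s)|\ge 4\lambda c_p\Bigr]\ \le\ C\,\frac{(t-s)^{r/2}}{\lambda^r}
\]
uniformly for $n(t-s)\ge 1/2$. Since this is already a maximal bound, a plain covering of $[0,T]$ by $O(T/\eps)$ intervals of length~$\eps$ and a union bound give \Ref{Mod-cty-bnd} in one stroke, bypassing Lemma~\ref{L1} and any separate treatment of the scale~$1/n$.

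Your route---a non-maximal increment bound fed into Lemma~\ref{L1}---also works, but is more circuitous: it forces you to handle the $\phi_n$ term explicitly and then absorb it via the ``nontrivial regime'' argument, which, while correct, requires more bookkeeping than you have written out (in particular, showing that $(\sqrt\eps/a)^{p-r}$ is bounded in that regime with a constant not depending on~$T$ needs a couple of extra lines). One further point to watch: the sharp exponent $r=p(1+b)/(p+b)$ emerges cleanly from the Fuk--Nagaev \emph{tail} inequality in \cite{Rio2013}; whether the \emph{moment} inequality you cite delivers exactly this~$r$ (rather than any $r'<r$) depends on the precise version. You can sidestep this entirely by taking the tail bound directly as the input to Remark~\ref{ModCty-remark}(1), skipping Markov's inequality. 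What your approach buys is self-containment within the paper's own lemmas; what the paper's approach buys is brevity, since the maximal inequality already absorbs the dyadic work.
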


For a process~$Y$ for which the differences $\sqrt n\{Y(j/n) - Y((j-1)/n)\}$ 
satisfy the same conditions as
the~$X_j$,  but~$Y(t)$ may vary on intervals of the form $((j-1)/n,j/n]$, 
the bound~\eqref{max-moment-bnd}
in the proof of Lemma~\ref{L3} can be
used to show that~\eqref{L1-1st-cond} in Lemma~\ref{L1} is satisfied,
with $\b = r/2$ and $\g = r$.
A pendant of~\eqref{L1-2nd-cond} is then needed to
control the variation on intervals of length~$1/n$.

\begin{remark}\label{rk:Gaussian}
\ignore{
The following  bound, using the inequality $\min\{x,y\} \le \sqrt{xy}$ for $x,y \ge 0$, 
together with
Markov's inequality, may be useful in practice:  
For any $\b > 0$,
\be{
  \IP\bclr{\min\bclc{\abs{Y(t)-Y(s)}, \abs{Y(s)-Y(r)}} \geq \lambda} 
            \Le \frac{\IE\bcls{\abs{Y(t)-Y(s)}^{\beta/2}\abs{Y(s)-Y(r)}^{\beta/2}}}{\lambda^\beta}\,.
}
}
If $Z$  is a  centred Gaussian process with 
\ben{\label{eq:gpcovsm}
    \IE\bcls{\bclr{Z(v)- Z(u)}^2} \Le k \abs{v-u}^\tau,
}
for some positive constants $k$ and~$\tau$, then, for any ${\gamma} \ge 2$, 
\ba{
 &\pr[|Z(t) - Z(s)| > a] \Le k^{\gamma/2}\ex \{|G|^\gamma\}a^{-\gamma}|t-s|^{\tau\gamma/2},
}
where $G \sim \scrn(0,1)$ is standard normal. 
Then Lemma~\ref{L1} and Remark~\ref{ModCty-remark}-(iii) imply that 
\ben{\label{eq:norm-Z-diff-bnd}
\pr\bclr{\| Z_\e - Z\| > \l} \leq T  \hat{C}  \eps^{\frac{\tau\gamma}{2}-1}  \lambda^{-\gamma},
}
where $\hat C=\hat C(k,\tau, \gamma)$ is a suitable constant.

\end{remark}

We are not aware of any general theory for bounding the final term $\IP(Z\in K^\theta \setminus K^{-\th})$, even for 
restricted classes of sets and continuous Gaussian processes.
For finite dimensional Gaussian measures and convex sets, such enlargements 
have order~$\theta$ as $\theta\to0$; see, for example, \cite{Ball1993} and \cite[Section~1.1.4]{Gotze2019}. 
For Gaussian processes with  values in a Hilbert space, there are some results when~$K$ is an open ball \cite{Gotze2019}. 
That being said, for certain $K$ and~$Z$, it may nonetheless be possible to obtain quantitive results;  see the following two examples. 

\smallskip
\noi
 
 (i)
If~$(Z_t: t\in[0,1])$ is a Brownian motion  on $\RR^d$ 
and  $g\colon\ID([0,1]; \RR^d)\to\IR$ is a 
measurable function that is Lipschitz on $C([0,1];\RR^d)$
such that~$g(Z)$ has a bounded density,  for example if $d=1$ and $g(w)=\sup_{0\leq s\leq 1} w(s)$, then
for $K=\{w\in \ID\colon g(w) \leq y\}$,  it is easy to see that
\be{
   \IP\clr{Z\in K^{\theta}\setminus K^{-\th}} \Le c' \theta,
}
where $c'$ is a constant depending on the density bound and the Lipschitz constant of~$g$. 
In such an example, Theorem~\ref{thm:smoothg} 
can be used to provide bounds on the Kolmogorov distance 
between $\law\clr{g(X)}$ and $\law\clr{g(Z)}$.

\smallskip
\noi
 
 (ii)
We can also obtain quantitive results for finite-dimensional distributions as follows.
Setting $d=1$, let $0<t_1<\cdots< t_k\leq T$, $\cK$ be a convex set in $\IR^k$, and
\[
K=\bclc{w\in \ID \colon \clr{w(t_i)}_{i=1}^k \in \cK}.
\]

\noi
Noting $K^\theta \subseteq \bclc{w\in \ID \colon \clr{w(t_i)}_{i=1}^k \in \cK^{\theta\sqrt{k}}}$ and $K^{-\theta} \supseteq \bclc{w\in \ID \colon \clr{w(t_i)}_{i=1}^k \in \cK^{-\theta\sqrt{k}}}$, if $Z$ is a Gaussian process and $(Z(t_i))_{i=1}^k$ has non-singular covariance, then Gaussian isoperimetry or anti-concentration (e.g., \cite{Ball1993} and \cite[Section~1.1.4]{Gotze2019}) implies
\be{
   \IP\clr{Z\in K^{\theta}\setminus K^{-\th}} \Le c_k \theta,
}
where $c_k$ is a constant depending on the dimension $k$ and on the covariance kernel of~$Z$.

\smallskip

Theorem~\ref{thm:smoothg} and the discussion following should be compared to \cite[Theorem~2]{Barbour1990} 
and \cite[Proposition~2.3]{Kasprzak2020}, which give  criteria for weak convergence assuming a bound of the form 
\be{
    \babs{\IE\cls{h(X_n)}- \IE\cls{ h(Z)}} \leq \norm{h}_{M^0} \, \kappa\sp{n}
}
for all functions~$h$ in the larger class $M^0$, which are not assumed to satisfy the smoothness condition~\eqref{eq:smooth} 
(the statement in \cite[Theorem~2]{Barbour1990} is not correct, and the bound must hold for functions without the smoothness 
condition). For functions~$h$ such that $\|D^2h\| < \infty$, it is immediate that, for fixed $w,x \in \ID$,
$|D^2h(w)[x,y_r]| \to 0$ if $\|y_r\| \to 0$ as $r\to\infty$.  Under the additional smoothness condition~\eqref{eq:smooth},
$|D^2h(w)[x,y_r]| \to 0$ for some sequences~$y_r$ such that $\|y_r\| = 1$ for all~$r$,
in which the functions~$y_r$ become `small' 
in the sense that $|\{u \in [0,T],\, y_r(u) \ne 0\}| \to 0$.\footnote{For example, 
we take $x= x_1 I_v$ and $y_r = x_2 (I_{v - r^{-1}} - I_v)$ with $x_1, x_2\in\R^d$ and $v\in[0,T]$. }
A minor advantage of
working with this smaller class of functions~$M^0_c$ is that a discretization step
can be avoided, which in turn can remove a log-term from the convergence rate; see \cite[Remark 2 and (2.29)]{Barbour1990}.
More importantly,
applying Stein's method using only the test functions in the smaller class has wider applicability;
for instance, \cite[Theorem~1.2]{Barbour2021a} gives a Gaussian process approximation to the GI/GI/$\infty$ queue,
using condition~\eqref{eq:smooth} in the proof in an essential way.

\begin{example}\label{ex:rate}

As a proof of concept, we explore the quality of result that can be obtained with Theorem~\ref{thm:smoothg}   
in the classical case, where 

\noi
\[
X_n(s) =n^{-1/2}\sum_{i=1}^{\floor{ns}}W_i
\]

\noi
 with  $(W_i)_{i\geq 1}$  real centred,  i.i.d.
random variables  such that 
$\E[ W_1^2] =1$ and $\IE\bcls{\abs{W_1}^{p}}<\infty$ for some $p \geq 3$.  
Donsker's theorem implies that 
the  limiting process $Z$ is  a  standard Brownian motion.
First, \cite[Theorem~1 and Remark 2]{Barbour1990} implies that, 
for $p \ge 3$, the bound~\eqref{eq:thmhypbd} holds with
\ban{
    \kappa\sp{n}_1 &\Eq CTn^{-1/2}\ex|W_1|^3 \quad\mbox{and}\quad \kappa\sp{n}_2 \Eq \tfrac43 Tn^{-1/2},
    \label{eq:kappa-bnds}
}
for a universal constant~$C$, and
then the first two terms of~\eqref{the_bdd} are bounded by

\noi
\begin{align}\label{add_2}
\begin{aligned}
   & C_0 \kappa\sp{n}_1 +
       \mathfrak{c}_1 \kappa_2\sp{n}, 
\end{aligned}
\end{align}

\noi
where $C_0 = C_0(\e,\d)$ and $\mathfrak{c}_1 = \mathfrak{c}_1(\e,\d)$ are as in \eqref{defC0},
\eqref{MC1}, and \eqref{CedT}.
Note that, for  the Brownian motion~$Z$,
 we can deduce
 from Remark \ref{rk:Gaussian}  that for any $l \geq 2$,
 
 \noi
 \begin{align}\label{add_1}
  \qquad  \IP\bclr{\norm{Z_\eps - Z}\geq \theta} 
  \leq  K_Z\,T\eps^{ \frac{l}{2}-1} \theta^{-l}  
\end{align}

\noi
for some constant~$K_Z$ depending on $\law(Z)$ and $l$.
Moving to $\IP\bclr{\norm{X_{n,\eps} - X_n}\geq \theta}$,  it is possible to use Doob's 
$L^p$-inequality  and Rosenthal's 
inequality  to bound 
\be{
\IP\left(  \sup_{s\leq v \leq s+\eps} \abs{X_n(v)-X_n(s)}\geq \theta \right)\leq  {K_W} \eps^{p/2} \theta^{-p} 
}
for some constant  $K_W$ depending on $\law(W_1)$ and $p$.
From here, a standard argument, based on the inequality
\begin{align*}
   |X_{n,\eps}(s) - X_n(s)| 
   &\Le \sup_{|h|   \le \eps}|X_n(s+h) - X_n(s)| \\
    &   \Le 3\max_{0 \le k \le \lfloor T/2\eps \rfloor} \sup_{2k\eps \le v \le 2(k+1)\eps} 
    |X_n(v) - X_n(2k\eps)|,
\end{align*}

\noi
implies that
$\IP\bclr{\norm{X_{n,\eps} - X_n}\geq \theta}$
 is upper bounded of order $T\eps^{\frac{p}{2}-1} \theta^{-p}$. 
 However, we can get a bound of a similar quality by applying Lemma~\ref{L1} and  Remark~\ref{ModCty-remark}(2), 
 which we do here to illustrate their use. We first
  verify~\eqref{L1-1st-cond} for {\it all\/} $0 < s < t \le T$. 
If $\abs{t- s}<1/n$, then  for $s< u<t$,
\be{
\min\bclc{\abs{X_n(t)-X_n(u )}, \abs{X_n(s)-X_n(u)}}=0,
}
since at least one term in the minimum must be zero. If $\abs{t-s}\geq1/n$, then for $p>2$,  Rosenthal's 
inequality \cite[Theorem 3]{Rosenthal1970} implies that

\noi
\begin{align*}
\IE\bcls{\abs{X_n(t)-X_n(u)}^{p}}
&  \Le  
           C_p \IE\{|W_1|^p\}\left(\frac{\ceil{n(t-s)}}{n}\right)^{p/2}  \\
         &  \Le 2^{p/2} C_p \IE\{|W_1|^p\} (t- s)^{p/2},
\end{align*}
where $C_p$ is a constant depending only on~$p$, 
and~\eqref{L1-1st-cond} thus holds, for $\b = \frac{p}{2} - 1 > 0$ and 
$\g = p$, by Markov's inequality. In order to   use Remark~\ref{ModCty-remark}-(ii), we also note that
\begin{align*}
  \IP(J_{X_n}(T) \geq \theta/2) &\Eq 
    \IP\bbclr{\max_{1\leq i \leq \lfloor nT \rfloor} W_i \geq (\theta/2) \sqrt{n}} \\
    & \Le      \lfloor nT \rfloor\, \IP\big( | W_1 | \geq    (\theta/2) \sqrt{n} \big) \\
    &\Le \lfloor nT \rfloor\,  \frac{ \IE\big( | W_1|^{p} \big) }{  (\theta/2)^{p} n^{p/2}  }      
    \Le   2^{p+1}   \IE\big( | W_1|^{p} \big)  T  n^{1 - \frac{p}{2} } \theta^{-p}.
\end{align*}
  Altogether, we deduce from Lemma~\ref{L1}  with $\b = \frac{p}{2} - 1 > 0$ and $\g = p$ that 
\ben{
   \IP\bclr{\norm{X_{n,\eps} -X_n}\geq \theta} \Le
  \tK_W T\left(\eps^{ \frac{p}{2} -1 } \theta^{-p} + n^{1- \frac{p}{2} } \theta^{-p}\right)
\label{X-cty-bnd}
}

\noi
for some constant~$\tK_W$ depending on $\law(W_1)$ and $p$.

\ignore{
Now, assuming that the set~$K$ is such that 
$\IP(Z \in K^{2(\theta+\g)} \setminus K^{-2(\theta+\g) })\leq c' (\theta + \g)$, as, for example, if~$K$ is 
the set of paths $y\in\ID$ whose 
 maximum lies in a fixed interval,  or  $K$ is  the set of paths $y\in\ID$ whose  finite dimensional projections
$\big( y(t_i): i=1,...,k\big)$ lie in a fixed convex set,}
Hence, in view of \eqref{add_2}, \eqref{add_1}, \eqref{X-cty-bnd},
\eqref{eq:kappa-bnds}, \eqref{CedT} and \eqref{the_bdd}, it follows
from Corollary~\ref{LP-corollary} that, for $T\geq 1$, $0 < \eps \le \half \sqrt T$
and $0 < \d \le 2$, we have

\noi
\begin{align*}
\dlp\bclr{\law(X_n), \law(Z)}
  	\ \leq\ C\left(T^{5/2}(\eps\delta)^{-3} n^{-1/2}
	+  T\eps^{ \frac{p}{2} -1} \theta^{-p}  
           + Tn^{1- \frac{p}{2} } \theta^{-p} + \theta + \gamma \right) 
                +  6 e^{- \frac{\gamma^2}{8T\delta^2}, }
\end{align*}

\noi
for some constant~$C$.
We first  choose $\gamma= 2\delta \sqrt{10T\log n}$  so that  the exponential term is of
order $\bigo(n^{-5})$, 
which is negligible compared to the first term $T^{5/2}(\eps\delta)^{-3} n^{-1/2}$. Note also that we shall 
choose $\e$ to be much bigger than $1/n$ and   also choose $\theta =\sqrt{T} \delta$. In this way, we only need 
to balance three terms
\[
T^{5/2}(\eps \delta)^{-3} n^{-1/2}, \quad   T\eps^{p/2-1} \theta^{-p}  \quad
            {\rm and}\quad  \theta.
\]
Balancing $\theta$ and $ T\eps^{p/2-1} \theta^{-p}$ gives $\theta=T^{1/(p+1)} \eps^{(p-2)/(2 (p+1))}$ and then 
balancing the final two terms in $\eps$ and $\delta$ gives
\be{
 \eps \Eq \bclr{\{T^{4}n^{-1/2}\}^{p+1}T^{-4}}^{1/(5p-1)}    
        \quad\mbox{and}\quad \th \Eq \sqrt{T}\d \Eq \bclr{\{T^{4}n^{-1/2}\}^{\frac{p}{2}-1}T^{3}}^{1/(5p-1)}.
}
As a result, we have established a rate of convergence in L\'evy--Prokhorov distance:
\be{
   \dlp\bclr{\law(X_n), \law(Z)}  \Eq
\bigo\Bigl[  (\log n)^{1/2} \bclr{\{T^{4}n^{-1/2}\}^{\frac{p}{2}-1}T^{3}}^{1/(5p-1)} \Bigr].
}
Assuming finite third moments ($p=3$) and $T=1$, the rate is $\bigo\bigl(n^{- \frac{1}{56}} \sqrt{\log n}  \bigr)$,
and we can obtain  the rate $\bigo\bigl( n^{- \frac{1}{20} +a } \bigr)$
for arbitrarily small $a>0$, if  we assume that~$W_1$ has all its moments.

\begin{remark}\label{rem19} Fix $T=1$.
For the bounded Wasserstein distance, similar calculations can be carried through, based on the bound given
in Corollary~\ref{LP-corollary}.
From~\eqref{X-cty-bnd}, it follows by integration  that $\IE \| X_{n,\eps} - X_n \| = \bigo\bigl(\eps^{(p-2)/(2p)}\bigr)$,
so that the bound~\eqref{Cor-BW-bnd} is easily seen to be of order
\[
    \bigo\bigl(\eps^{ \frac{p-2}{2p} } + \d + \eps^{-2}\d^{-2}n^{-1/2}\bigr). 
\]
Balancing the terms by taking 
\[
\d = \eps^{-2/3} n^{-1/6}  = \eps^{(p-2)/(2p)}\]

\noi
gives $\eps = n^{ - \frac{p }{7p-6}  }$, 
and hence a bound
\[
\dbw\bclr{\law(X_n), \law(Z)}   \Eq  \bigo\bigl(n^{- \frac{1}{6} + \frac{2p}{3(7p-6)}  }\bigr).
\]

\noi  
Thus, assuming finite third moments ($p=3$), the rate is
$\bigo\bigl(n^{- \frac{1}{30} }\bigr)$,  
and, if~$W_1$ has all its moments, the rate is $\bigo\bigl(n^{- \frac{1}{ 14} + a}\bigr)$, 
for any $a > 0$.
In this example, the approach of \cite{Coutin2020}, discussed in Section~\ref{related}, can also be applied.
It gives the rates
$\bigo\bigl(n^{- \frac1{18} + a} \bigr)$ and $\bigo\bigl(n^{- \frac1{6} + a} \bigr)$ for any $a > 0$, respectively,
for bounded Lipschitz functionals, which are  better;
however, no bounds are given by their method for the L\'evy--Prokhorov distance.
In the strong approximation theorems of \cite{Komlos1975, Komlos1976},
copies of $X_n$ and~$Z$ are constructed on the same probability space, in such a way that
the distribution of the random variable $\|X_n - Z\|$ is tightly controlled. In particular, with the moment assumptions above,
their bounds on $\IE\|X_n - Z\|$ imply corresponding rates for bounded Lipschitz
functionals of orders $\bigo\bigl(n^{- \frac1{6}} \bigr)$ and $\bigo\bigl(n^{- \frac1{2} + a} \bigr)$ for any $a > 0$, respectively
 (see \citet[Theorem~2.6.7]{Csorgo1981}), and, if $W_1$ has a finite moment generating function,
 a rate of order $\bigo\bigl(n^{- \frac1{2}}\log n \bigr)$ \cite[Theorem~1]{Komlos1976}; 
 these are much
 better still.
 \end{remark}

 \begin{example}\label{DK21}
In \cite[Section~6]{Dobler2021}, the joint distribution of the processes counting edges and two-stars in the Bernoulli
random graph~$\cG(n,p)$
is shown, after appropriate centering and normalization, to converge weakly to a Gaussian limit.  In this example, we complement
D\"obler and Kasprzak's result with a convergence rate, and
we refer interested readers to their paper for an overview of relevant literature.
  The two-dimensional process that they considered was $X_n := (X_{n}\ui,X_{n}\ut)$
defined by
\besn{\label{DK21-1}
  X_{n}\ui(t) &\Def \frac{\lfloor nt \rfloor - 2}{n^2}\sum_{1 \le i < j \le \lfloor nt \rfloor} (E_{ij} - p)
      \ =:\ \frac{\lfloor nt \rfloor - 2}{n^2}\, V_n(t);\\
  X_{n}\ut(t) &\Def \frac1{n^2}\sum_{1 \le i < j < k \le \lfloor nt \rfloor} \tE_{ijk}, 
  \quad 0 \le t \le 1,
}

\noi
 where $E_{ij}$, $1 \leq i < j \leq n$, are independent indicator random variables with fixed expectation~$p \in(0,1)$, and where
\[
     \tE_{ijk} \Def E_{ij}E_{ik} + E_{ij}E_{jk} + E_{ik}E_{jk} - 3p^2,  \quad i < j < k.
\]
Letting 
\begin{align}\label{BMY}
Y(t) := t\sqrt{p(1-p)}\,B(\half t^2),
\end{align}

\noi
where~$B$ is a standard real Brownian motion, the limiting random process
is the degenerate two-dimensional process\footnote{Here we take a different representation of the limiting process $Z$ from that in  \cite[Theorem~6.4]{Dobler2021}, and one can easily verify by checking the covariance structure of these two centred Gaussian processes that they are identical in law.}   
\begin{align}\label{BMZ}
Z := (Y,2pY).
\end{align}
In \cite[Theorem~6.4]{Dobler2021}, it is shown that the
condition~\eqref{eq:thmhypbd} of Theorem~\ref{thm:smoothg} is satisfied.
More precisely, for any $h\in M \supset M^0\supset M^0_c$ (for all $c>0$),
the bound \eqref{eq:thmhypbd} holds 
with 
\[
\kappa_1 \Eq 16422 \frac{\sqrt{\log n}}{\sqrt{n}} + \frac{138}{\sqrt{n}} 
\Eq \bigo\Big( \frac{\sqrt{\log n}}{\sqrt{n}}   \Big)
\quad
{\rm and}
\quad
\kappa_2 = 0.
\]
In this particular example, 
we obtain the following rates of functional
convergence  in the L\'evy--Prokhorov distance and in the bounded
Wasserstein distance:
\begin{align}\label{LPBW}
\textbf{claim:}
\quad
 \dlp\bclr{\law(X_n), \law(Z)} =
\bigo\bigl( n^{- \frac{1}{20} +a } \bigr)
\quad
{\rm and} 
\quad
d_{\rm BW}\bclr{\law(X_n), \law(Z)} = \bigo\bigl(n^{- \frac{1}{ 14} + a}\bigr)
\end{align}
for any $a > 0$. 
%

To establish the claim by invoking
 Corollary~\ref{LP-corollary}, we need to 
 bound probabilities like
 \begin{align}\label{DK-bdd}
 \pr\big[\|X_{n,\e} - X_n\| > \theta \big]
 \quad
 {\rm and}
 \quad 
 \pr\big[\|Z_\e - Z\| > \theta\big]
\end{align}
for any $\theta, \eps > 0$ and for any $n\geq 2$. 
For this purpose,
we use Lemma~\ref{L1} together with Remark~\ref{ModCty-remark}\,(i)
to bound the first term in \eqref{DK-bdd}, and Remark~\ref{rk:Gaussian} to bound 
the second term in   \eqref{DK-bdd}.
For the latter, it is
immediate from \eqref{BMY}, \eqref{BMZ}, 
and independence of Brownian increments that, for $0 \le v < u \le 1$,
\bes{
    \E\big[ |Y(u) - Y(v)|^2 \big] 
    & \Eq p(1-p) \E\Big[  \big| u B(\tfrac{1}{2} u^2) - v B(\tfrac{1}{2} v^2) \big|^2 \Big]   \\
    &\Eq p(1-p) \E\Big[  \big| u \big\{ B(\tfrac{1}{2} u^2) - B(\tfrac{1}{2} v^2) \big\} + (u- v) B(\tfrac{1}{2} v^2) \big|^2 \Big] \\
    &\Eq p(1-p) \Big(  \frac{u^2}{2}(u^2 -v^2) + (u-v)^2 \frac{v^2}{2} \Big) \\
        &\Le  \tfrac{1}{4} \cdot \tfrac{3}{2} (u-v) \Eq \tfrac38 (u-v),
}
so that the bound~\eqref{eq:norm-Z-diff-bnd} can be used 
with $T = \tau = 1$ and with any choice of $\g \geq 2$.
In other words, in view of \eqref{BMZ},  we have
\begin{align} \label{eqY1}
\begin{aligned}
 \pr\big[\|Z_\e - Z\| > \theta\big]
 & \Le  \pr\big[\| Y_\e - Y\| >  \tfrac{1}{2}  \theta\big] +  \pr\big[ 2p\|Y_\e - Y\| >  \tfrac{1}{2}\theta\big] \\
 & \Eq \bigo\big( \eps^{\frac{\g}{2} - 1} \theta^{-\g} \big),
\end{aligned}
\end{align}
which is of the same order as the bound in \eqref{add_1}.
For $\pr[\|X_{n,\e} - X_n\| > \theta]$,
considering the first component, note that, for $0 \le s < t \le 1$,
\begin{align*}
    X_{n}\ui(t) - X_{n}\ui(s) 
    & \Eq  \frac{\lfloor nt \rfloor - \lfloor ns \rfloor}{n^2} \, V_n(s) +
              \frac{\lfloor nt \rfloor - 2}{n^2}\, (V_n(t) - V_n(s)) \\
              &\ =:\ A_n(s,t) + A'_n(s,t),
\end{align*}

\noi
say.  
Now, for $U \sim \rm Binomial(m,p)$, 
it follows that $\ex\{(U - mp)^{2r}\} \le C_r(p) m^r$, 
for a suitable constant $C_r(p)$,
for any $r \in \IN$.  
Hence it follows, after a little calculation, 
that, for $0 \le s < t \le 1$ such that $(t-s) \ge \half n^{-1}$, 
we have
\bes{
    \ex\{(A_n(s,t))^{2r}\} &\Le \Bigl(\frac{3(t-s)}{n}\Bigr)^{2r}C_r(p) \Bigl(\frac{ (ns)^2}2 \Bigr)^r \Le K_1 (t-s)^{2r};\\
    \ex\{(A'_n(s,t))^{2r}\} &\Le \Bigl(\frac{t}{n}\Bigr)^{2r}C_r(p) \biggl(\binom{\lfloor nt \rfloor}2
               - \binom{\lfloor ns \rfloor}2 \biggr)^r \Le K_2 (t-s)^r,
}
where the constants $K_1, K_2$, and $K$ in \eqref{1.10.1} below do not depend on $(t, s, n)$. 
Note that the lower bound on $t-s$ is used to accommodate the rounding error: 
\[
\lfloor nt \rfloor -\lfloor ns \rfloor \leq nt -ns +1 \leq 3n(t-s) \quad
{\rm when}
\quad
(t-s) \ge \half n^{-1}.
\] 

\noi
It now follows easily that, in the same range of $s$ and~$t$, and for any $r \in \IN$,
\ben{\label{1.10.1}
     \pr\big[|X_n\ui(t) - X_n\ui(s)| > \theta \big] \Le K(t-s)^r \theta^{-2r}.
}

For the second component, observe first that    for $0 \le s < t \le 1$ 
\[
    X_n\ut(t) - X_n\ut(s) 
    \Eq \frac1{n^2}\sum_{1 \le i < j < k \le n} 
    \mathbb{I} \{\lfloor ns \rfloor < k \le \lfloor nt \rfloor\}   \tE_{ijk}.
\]
Now, in computing $\ex\{(X_n\ut(t) - X_n\ut(s))^{2r}\}$, the expectations $\ex\Bigl\{\prod_{l=1}^{2r} \tE_{i_l,j_l,k_l}\Bigr\}$
are zero unless each index set $\{i_l,j_l,k_l\}$ overlaps with another index set $\{i_{l'},j_{l'},k_{l'}\}$ in at least
{\it two\/} elements.  The dominant contribution to the sum making up $\ex\{(X_n\ut(t) - X_n\ut(s))^{2r}\}$ is seen to come from
collections of index sets consisting of~$r$ pairs that overlap in two elements.  Each such pair has~$4$ distinct indices,
the largest of which lies between $\lfloor ns\rfloor$ and~$\lfloor nt \rfloor$, so that there are $O\bigl(\{n^4(t-s)\}^r\bigr)$ such collections of index sets (when $t-s \geq \frac{1}{2}n^{-1}$)
and each gives a contribution of order $O\bigl(\{n^{-2}\}^{2r}\bigr)$.  The contribution from all other arrangements of index
set is of smaller order.  Hence
\[
   \ex\{(X_n\ut(t) - X_n\ut(s))^{2r}\} \Le K_3 (t-s)^r,
\]
for a suitable constant~$K_3$, and it follows that, for $0 \le s < t \le 1$ such that $(t-s) \ge \half n^{-1}$ and
for any $r \in \IN$, we have
\ben{\label{1.10.2}
     \pr\big[  |X_n\ut(t) - X_n\ut(s)| > \theta \big] \Le K_3(t-s)^r \theta^{-2r}.
}
Since the process~$X$ has only one jump in any interval of length~$1/n$, the bounds \eqref{1.10.1} and~\eqref{1.10.2}
can be used with $t-s = 1/n$ to bound~$\f_n(\eta)$  defined at \eqref{L1-2nd-cond}. 
That is, we can find a constant $\widetilde{K}_3$ such that 
\[
\varphi_n(\eta) \Le \widetilde{K}_3 n^{1-r} \eta^{-2r}.
\]

\noi
Invoking Lemma~\ref{L1}, it now follows that, for any $r \in \IN$,
\[
  \pr\big[ \|X_{n,\e} - X_n\| > \theta \big] \Le K'_r \theta^{-2r}(n^{1-r} + \e^{r-1}),
\]
for a suitable constant~$K'_r$.   
Note that the above bound is of exactly the same orders as \eqref{X-cty-bnd}
in the case of
sums of i.i.d.\ random variables, so that the same choices of $\eps,\d,\th$, and~$\g = p =2r$ (for any $r\geq 1$) can be made 
as in Example \ref{ex:rate} and Remark  \ref{rem19} so as to verify our claim 
\eqref{LPBW}. 
%
 \end{example}

The example illustrates the strength of our approach, as is typical in Stein's method, that it applies in situations with non-trivial dependencies, where rates of convergence are not 
otherwise available;  see \cite{Barbour2021a} for another application where Theorem~\ref{thm:smoothg} is needed.
Being able to explicitly incorporate a time interval of length~$T$ that may depend on~$n$
is also very useful.  Note that the error estimates given above still converge to zero as $n\to\infty$, if
$T = T_n$ grows like a small enough power of~$n$.
\end{example}

The key to proving Theorem~\ref{thm:smoothg} is the following lemma on Gaussian smoothing, for which we need the 
$(\eps, \delta)$-smoothing of~$h$, defined in~\eqref{h-smoothed-at-w}. 
The lemma is an infinite-dimensional analog of finite-dimensional Gaussian smoothing inequalities found, for example, 
in \cite[Section~4.2]{Raic2018}. 
The result is closely related to \cite[Theorem 6.2, Chapter II]{Kuo1975}. 

\ignore{
Recall   
for $h$ a bounded function on $\mathbb{D}([0,1];\IR^d)$ that is measurable with respect to Skorokhod topology, 
\[
h_{\eps, \delta}(w) := \IE[ h( w_{\eps} + \delta B ) ], \quad \forall \eps,\delta\in(0,\infty),
\]
where $B$ is a standard $d$-dimensional Brownian motion on $[0,1]$ and $w_\eps$ is defined as in \eqref{def_w_eps}.
}

\begin{lemma}\label{lem:smoothbm}
Let $h\colon \ID\to\IR$ be  bounded and measurable
 with respect to the Skorokhod topology, and let $\eps$ and~$\delta$ be positive. 
Then the function $h_{\eps,\delta}$,  defined in \eqref{h-smoothed-at-w},
 is  also Skorokhod--measurable,   and has
infinitely many bounded Fr\'echet  derivatives {\rm(}with respect to the uniform norm{\rm)} satisfying

\noi
\begin{align}
 \sup_{w\in\ID}   \norm{D^k h_{\eps, \delta}(w) } \Le  C_k \sup_{y\in \ID} \abs{h(y)}\,  
 (  T + \eps^2   )^{k/2} \frac{1}{\eps^k\d^k}   ,
     \label{h-deriv-bnd}
\end{align}

\noi
where
$C_k$  is a constant\footnote{For example, we can choose 
$C_1= \sqrt{2/\pi}  < 1$ and $C_2 =3$. \label{footnote2}}  depending only on~$k$. Moreover, 
with $C_0(\e,\d)$ bounded at \eqref{CedT}, we have
\ben{\label{eq:hepsdelnorm}
    \norm{h_{\eps, \delta}}_{M^0} \Le  C_0(\e,\d)  \sup_{y\in \ID} \abs{h(y)} ,
}

\noi
and  for  $z,x\in\ID$,

\noi
\begin{align}
\label{eq:hepsdelD2}
\begin{aligned}
 & \babs{D^2 h_{\eps, \delta}(w) [z, x]}   \\
& \Le \sqrt{2} \eps^{-1} \delta^{-2} \big( T + \eps^{2} \big)^{\frac12}   \|z\| 
 \left( \sup_{y\in \ID} \abs{h(y)} \right) 
  \left( \int_0^T |\nabla  x_\eps(s)|^2 ds + |x_\eps(0)|^2 \right)^{1/2}.
\end{aligned}
\end{align}

\noi
If, in addition, $h$ is such that $\| D^n h \| < \infty$ for some
  integer $n\geq 1$, then, for any integer $k\geq 0$, we have
 
  \noi
\begin{align} 
  \norm{D^{k+n} h_{\eps, \delta}}  
  \Le \norm{D^n h}\, C_k   (  T +\eps^2   )^{\frac{k}{2}}   (\eps \delta)^{-k}, 
  \label{D_kn}
\end{align}

\noi
with the same $C_k$ as in \eqref{h-deriv-bnd};
and if $\norm{D h}<\infty$, then
for  $z,x\in\ID$,

\noi
\begin{align}\label{eq:hsmoepsdelD2}
    \babs{D^2 h_{\eps, \delta}(w) [z, x]}  
       \Le\sqrt{2/\pi}   \|z\| \norm{D h} \frac{1}{\d}
           \left( \int_0^T |\nabla  x_\eps(s)|^2 ds + |x_\eps(0)|^2 \right)^{1/2}.
\end{align}

\end{lemma}

\begin{remark}\label{rem_15} For $x_1,x_2 \in \IR^d$ and for functions $z = x_1I_r$ and~$x = x_2(I_s - I_t)$,
as per~\eqref{eq:smooth} and assuming $s<t$, the inequality~\eqref{eq:hepsdelD2} implies the inequality

\noi
\begin{align*}
&   \babs{D^2 h_{\eps, \delta}(w) [x_1I_r, x_2(I_s - I_t)]}  \\
 &
 \qquad  \Le |x_1|\,|x_2|\,\sup_{y\in \ID} \abs{h(y)}  \,  \sqrt{1+\tsfrac{\eps}{2}} 
 \big( T +\eps^2\big)^{\frac12}
       \frac{1}{\eps^2 \d^2}        |t-s|^{1/2}.
\end{align*}

\noi
This is because, with  $x(u) = x_2(I_s(u)-I_t(u)) = x_2 \mathbb{I}[ s\leq u < t]$,  

\noi
\begin{align*}
& \int_0^T |\nabla x_\eps(u)|^2\, du 
\Eq 
\frac{|x_2|^2}{4\eps^2}  \int_0^T \Big[  \II[s\leq u+\eps < t]  -\II[s\leq u-\eps < t]    \Big]^2 \,du  \\
&\Le
\frac{|x_2|^2}{4\eps^2} \Big( 4\eps  \II[t-s > 2\eps] + 2 |t-s| \II[ t-s \leq 2\eps] \Big)
 \\
    & \Le  |x_2|^2\,\frac{(t-s)}{2\eps^2}.
 \end{align*}
 
\noi
and, with $U\sim\text{Uniform}(-1,1)$,

\noi
\begin{align*}
|x_\eps(0)|
& \Eq  \big\vert \E[  x(\eps U) ] \big\vert 
\Eq |x_2|  \cdot \pr\big( s\leq \eps U  < t  \big) \\
&\Le  |x_2|   \frac{\min\{t-s, \eps\}}{2\eps} \Le  |x_2|   \frac{\sqrt{t-s} }{2\sqrt{\eps}}.
\end{align*}

\noi
Similarly, if $\norm{Dh}<\infty$, then~\eqref{eq:hsmoepsdelD2} implies
\[
   \babs{D^2 h_{\eps, \delta}(w) [x_1I_r, x_2(I_s - I_t)]}  
 \Le \frac{ |x_1|\,|x_2| }{\sqrt{ \pi} } \,\norm{Dh}  \frac{ |t-s|^{1/2} }{\eps \d}  \sqrt{1+\tsfrac{\eps}{2}}.
 \]
 See also \eqref{MC1} and \eqref{MC2}.
\end{remark}

\smallskip

An expression for the Fr\'echet derivatives and bounds can be found at~\eqref{formula_kk} and~\eqref{eq:derivbd}. 
They are not complicated, but require some set-partition notation, stemming from \emph{Fa\`a di Bruno's formula} for the
derivatives of an exponential. 
The proof begins with the easy fact that 
 $w_\eps$ belongs to  the \emph{Cameron-Martin space} 
of the sum of a  $d$-dimensional Brownian motion
and an independent Gaussian vector. (We must add the Gaussian vector 
because  $w_\eps$
may not satisfy $w_\eps(0)=0$, 
and thus  we present a variant of the Cameron-Martin theorem
in Theorem \ref{thm_CM} below.)
As a consequence,  we can  write
$h_{\eps, \delta}(w+x)-h_{\eps, \delta}(w)$
as a single expectation with respect to the Gaussian process
(Brownian motion plus an independent Gaussian vector).
Roughly speaking, such a difference is smooth in~$x$ due to the change 
of measure formula.

\section{Proofs}

Let us first state a variant of  the 
Cameron--Martin--Girsanov theorem \cite{Cameron1944},
when the Gaussian process is the sum of a Brownian motion
and an independent Gaussian random variable. 

\begin{theorem}\label{thm_CM} Let $\big(B(t)\colon t\in[0,T]\big)$ be a standard $d$-dimensional Brownian motion and 
$g = (g^{(1)}, ... , g^{(d)})\colon [0,T]\to \IR^d$ be a deterministic, 
absolutely continuous  function such that  

\noi
\begin{align}\label{cond_CM}
  \int_0^T | \nabla g(t)|^2 dt \Eq \int_0^T \sum_{i=1}^d  \left( \frac{d}{dt} g^{(i)}(t)  \right)^2 \,dt\  <\ \infty.
\end{align}

\noi
Let $\Theta$ be a  standard Gaussian random vector on $\R^d$ 
that is independent of $B$.
Then, for any  bounded measurable function~$\Phi: C([0,T]; \IR^d) \to\R$, we have 
\begin{align}
\begin{aligned}
& \IE\big[ \Phi(B + \Theta +g) \big] \\
&= 
\E\left( \Phi(B+\Theta) \exp\left[ \langle g(0), \Theta\rangle  - \frac{1}{2}| g(0)|^2
+  \int_0^T \nabla g(t)\, dB(t) - \frac{1}{2}\int_0^T | \nabla g(t)|^2 \,dt  \right] \right),
\end{aligned}
 \label{eq_CM}
\end{align}

\noi
where $\langle    \hspace{0.5mm}   \cdot     \hspace{0.5mm} \rangle$ 
denotes the inner product on $\IR^d$.

\end{theorem}

\noi
 Note that the Wiener integral 

 \noi
\begin{align}\label{W_int}
   \int_0^T \nabla g(t) \,dB(t) \Def  \sum_{i=1}^d \int_0^T   \left( \frac{d}{dt} g^{(i)}(t)  \right)\, dB^{(i)}(t)
\end{align}

\noi
is normally distributed with mean zero and variance $\int_0^T | \nabla g(t)|^2\, dt $. 

The usual Cameron-Martin theorem asserts that
the probability measure induced by $B + g$  
on the path space $C([0,T]; \R^d)$ is equivalent to 
that of $B$, when $g$ satisfies the condition \eqref{cond_CM}
and $g(0) = 0$;  see also pages 333-335 in \cite{RY}.
For our purpose, we need a process that is absolutely continuous with its shift by
$w_\eps$ from \eqref{def_w_eps}, which may not begin at zero,
Since  the law of $a+ \Theta$  is equivalent to the law of $\Theta$
for any $a\in\R$, we use the additional Gaussian smoothing by $\Theta$ in
\eqref{h-smoothed-at-w}.

\begin{proof}[Proof of Theorem \ref{thm_CM}]

Given a bounded measurable function~$\Phi: C([0,T]; \IR^d) \to\R$, 
we have 

\noi
\begin{align}
\begin{aligned}
\wt{\Phi}(v) 
:&=   \IE\big[ \Phi(v + g(0) + \Theta) \big] \\
&= \frac{1}{(2\pi)^{d/2}} \int_{\R^d} \Phi(v + g(0) + z) e^{-\frac{|z|^2}{2}}dz \\
&= \frac{1}{(2\pi)^{d/2}} \int_{\R^d} \Phi(v +y) e^{-\frac{|y|^2}{2}} 
e^{ \langle y, g(0) \rangle - \frac{1}{2} |g(0)|^2  } dy \\
&=  \IE\bigg[ \Phi(v + \Theta)
 \exp\Big(  \langle \Theta, g(0) \rangle - \frac{1}{2} |g(0)|^2  \Big)  \bigg] 
\quad\text{for $v\in C([0,T]; \IR^d)$,}
\end{aligned}
\label{CM:eq1}
\end{align}

\noi
where the third equality in \eqref{CM:eq1}   follows from a simple 
change of variable $y= g(0) +z$.
It is clear that 
$\wt{\Phi}$ is also a real bounded measurable function 
on  $C([0,T]; \IR^d)$.
Then, we deduce from the independence between $B$ and $\Theta$,
 the Cameron-Martin theorem, and \eqref{CM:eq1}
that with $g^\ast = g - g(0)$,

\noi
\begin{align*}
&\E\big[ \Phi(B+\Theta +g) \big] =\E\big[ \wt{\Phi}(B + g^\ast )\big]  \\
& =   \E\left( \wt{\Phi}(B) \exp\left[   \int_0^T \nabla g(t)\, dB(t) - \frac{1}{2}\int_0^T | \nabla g(t)|^2 \,dt  \right] \right)\\
&=  \E\left(  \Phi(B+\Theta) \exp\left[  \langle \Theta, g(0) \rangle - \frac{1}{2} |g(0)|^2  
+  \int_0^T \nabla g(t)\, dB(t) - \frac{1}{2}\int_0^T | \nabla g(t)|^2 \,dt  \right] \right),
\end{align*}

\noi
which is exactly the equality \eqref{eq_CM}.
\qedhere

\end{proof}

With the above change of measure formula, we are ready to prove
Lemma~\ref{lem:smoothbm}.

\begin{proof}[Proof of Lemma~\ref{lem:smoothbm}]
To establish the measurability of $h_{\eps, \delta}$, 
note that $x\mapsto x_\eps$ is continuous (and hence measurable 
with respect to the Skorokhod topology) and then that 
$(x,y)\mapsto h(x_\eps+\delta y)$ is measurable with respect to 
the product topology. 
Therefore,  $h_{\eps,\delta}$ is also measurable.

We first give  a formal  computation to indicate 
where the formulas below come from.
Note that for $w\in \ID$, $w_\eps$ is  absolutely continuous 
from $[0,T]$ to $\IR^d$ and
\ben{\label{eq:smthbd}
\| \nabla w_\eps \| 
= \frac{1}{2\eps} \big\| w(\bullet + \eps) - w(\bullet - \eps) \big\| \Le  \frac{1}{\eps} \| w\|,
}
where the function $w(\bullet + \eps)$ is defined according to the convention 
\eqref{w0T}.
Thus, we can apply the formula~\eqref{eq_CM} to write
\begin{align}\label{Y1}
h_{\eps, \delta}(w) 
=
\IE\Big[  h(\delta B + \d \Theta) \exp(\Psi(w) ) \Big],
\end{align}
 
\noi
where $\Psi(w)=:\Psi_\Theta(w)+\Psi_B(w)$ is a random element given by
\besn{\label{Y2}
\Psi(w) 
&:= \frac{1}{\delta} \langle \Theta, w_\eps(0) \rangle  
- \frac{1}{2\delta^2} | w_\eps(0)|^2 \\
& \qquad+ \frac1{\d} \int_0^T  \nabla w_\eps(t) \,dB(t) 
- \frac{1}{2\delta^2} \int_0^T  |\nabla w_{\eps}(s)|^2\, ds,
}
such that the random variable $e^{\Psi(w)}$ has mean one and finite moments
of all order, for any $w\in\ID$.
Now,  \emph{formally}, 
we ought to have
\be{
   D^k h_{\eps, \delta}(w)[x_1,\ldots, x_k]  
   =  \IE\Big[  h(\delta B + \d \Theta) D^k\exp(\Psi(w) )[x_1,\ldots, x_k] \Big],
}

\noi
and then $D^k\exp(\Psi(w) )[x_1,\ldots, x_k]$  
can be understood as $\exp(\Psi(w))$ times a polynomial of the 
derivatives of $\Psi(w)$, 
motivated by the \emph{Fa\`a di Bruno's formula}. 
Looking at the expression\footnote{For an absolutely continuous path $g$, 
$d(B + g)(t) = d(B(t) + g(t))$ is understood as
 $dB(t) + \dot{g}(t) dt$, where $dB(t)$ is the Brownian integrator and $\dot{g}(t)dt$
 is the usual Lebesgue integral with the derivative $\dot{g}(t)$ defined 
 for almost every $t$.} 

\noi
\begin{align}\label{Y3}
\begin{aligned}
\Psi(w+v) - \Psi(w) 
&= \frac1{\d} \int_0^T \nabla v_\eps(t)\,   d\bigl( B - \d^{-1} w_\eps\bigr)(t)
       - \frac{1}{2\d^2} \int_0^T |\nabla v_\eps(t) |^2 \,dt \\
&\quad + \frac{1}{\delta} \langle \Theta-\d^{-1} w_\eps(0), v_\eps(0)\rangle
-\frac{1}{2\delta^2} |   v_\eps(0)|^2,
\end{aligned}
\end{align}

\noi
with $v_\eps$ defined according to   \eqref{def_w_eps},
we can deduce from  \eqref{eq:smthbd} that
 
 \noi
\begin{align} \label{DPsi}
\begin{aligned}
D\Psi(w)[v] 
& =  \frac1{\delta} \int_0^T \nabla v_\eps(s)\, d\bigl(B(s)-\delta^{-1} w_\eps(s) \bigr)  +\frac{1}{\delta} \langle \Theta-\d^{-1} w_\eps(0), v_\eps(0)\rangle \\
&=: D\Psi_B(w)[v] + D\Psi_\Theta(w)[v] .
 \end{aligned}
 \end{align}
 
 \noi
And,  it is also easy to see that

\noi
\begin{align} \label{D2Psi}
 \begin{aligned}
 D^2 \Psi(w)[x,y] 
 &= -\frac1{\d^2} \int_0^T  \langle \nabla x_\eps(s), \nabla y_\eps(s)\rangle \,ds  
  - \frac{1}{\d^2}  \langle  x_\eps(0),   y_\eps(0)\rangle,  
\\
D^k \Psi(w)[x_1,\ldots, x_k] 
&= 0, \,\,\, k\geq 3,
\end{aligned}
\end{align}

\noi
and these higher derivatives are no longer random.
The above discussion, together 
with the Fa\`a di Bruno's formula,  leads to the following
claim:

\noi
 \begin{align}  \label{formula_kk}
D^n h_{\eps, \delta}(w) [x_1, ... , x_n]  
= \IE\left[  h(\delta B + \d \Theta) e^{\Psi(w)} \left( \sum_{ \pi\in\cP_{n,2}}  
            \prod_{\fb\in \pi}  D^{| \fb |} \Psi(w) [x_\fb]  \right)  \right],
\end{align}
where
\begin{itemize}[parsep=0pt]

\item

 $\cP_{n,2}$  is the set of all partitions of $\{1,...,n\}$, whose blocks have at most $2$ elements;

\item

  $\fb\in\pi$ means that $\fb$ is a block of $\pi$, 
  whose cardinality is denoted by $|\fb|$;
 
\item

   if $\fb = \{ i_1, ..., i_{|\fb|} \}$, the expression 
$D^{| \fb |} \Psi(w) [x_\fb] $ means 
$D^{| \fb |}  \Psi(w)[ x_{i_1}, ...,x_{i_{|\fb|}}]$; 
see \eqref{DPsi} and \eqref{D2Psi}.

%
%
 
\end{itemize}

%

Let us first verify the claim \eqref{formula_kk} for $n=1$:
\begin{align}\label{n1}
D h_{\eps, \delta}(w) [z]  
= \IE\Big(  h(\delta B + \d \Theta) e^{\Psi(w)}  D \Psi(w) [z]    \Big).
\end{align}

\noi
By  \eqref{h-smoothed-at-w}, \eqref{Y1}, and \eqref{Y2}, 
we can write for $w, z\in\ID$, 

\noi
\begin{align}
\begin{aligned}
h_{\eps, \delta}(w+z) -h_{\eps, \delta}(w) 
&= \E\big[ h(\d B + \d \Theta)  \big( e^{\Psi(w+ z) } - e^{\Psi(w)} \big)  \big].
\end{aligned}
\label{n1:1}
\end{align}

 \noi
 Now, we deduce from \eqref{Y3}, \eqref{DPsi}, and  \eqref{eq:smthbd}
 that
\begin{align*}
\babs{\Psi(w+z)-\Psi(w)- D\Psi(w)[z] } 
& = \frac{1}{2\delta^{2}} \int_0^T \abs{\nabla z_\eps(s)}^2 \,ds 
+ \frac{1}{2\d^2}| z_\eps(0) |^2 \\
&\leq  \frac{T\norm{z}^2}{2\delta^2 \eps^2} + \frac{1}{2\d^2} \| z\|^2,
\end{align*}

\noi
and

\noi
\begin{align}\label{use_normal}
    \int_0^T \nabla y(s)\, dB(s)\ \sim\ \scrn\bbclr{0, \int_0^T |\nabla y(s)|^2\,ds}.
 \end{align}
 
 \noi
It is thus straightforward to see from Taylor's expansion
and simple Gaussian computations that 

\noi
\begin{align}\label{n1:2}
     e^{\Psi(w+z) } - e^{\Psi(w)}   \Eq  e^{\Psi(w)}  D\Psi(w)[z] + \bO(\|z\|^2),
\end{align}

\noi
where the linear-in-$z$ term $e^{\Psi(w)}  D\Psi(w)[z]  = \bO(\|z\|)$. 
\emph{Here and in what follows}, the big-$\bO$   
is to be understood in the $L^p(\Omega)$-sense; for example,  
$U(z) = \bO(\|z\|)$ means that $U(z)$ is a random variable 
that depends on~$z$, 
and  $ (\ex|U(z)|^p)^{1/p}  = \bigo(\|z\|)$, 
in the usual sense of~$\bigo$, 
for any $p\in[2,\infty)$.  Therefore,  the equality in \eqref{n1}
follows from \eqref{n1:1} and \eqref{n1:2}. That is, 
the claim \eqref{formula_kk} is verified for $n=1$.

For later use, let us first recall from \eqref{DPsi}-\eqref{D2Psi}
that $D^{2} \Psi(w)[x,y]$ does not depend on $w$ 
and  that
\begin{align} \label{eq:F3}
\begin{aligned}
   D \Psi(w+z)[x]  &=    D \Psi(w)[x]  +   D^2 \Psi(w)[x,z],  \\
   D^2 \Psi(w)[x, z]  &= \bigo(\|z\|).
\end{aligned}
\end{align}

\noi

Next, we assume that the formula \eqref{formula_kk}
holds for all $n\leq k$,  and we want to show that \eqref{formula_kk} holds
   for $n=k+1$.
For $z\in\ID([0,1];\IR^d)$,
\begin{align}
  & D^k h_{\eps, \delta}(w+z) [x_1, ... , x_k] -D^k h_{\eps, \delta}(w) [x_1, ... , x_k] \notag \\
  &\Eq    \sum_{ \pi\in\cP_{k,2}} \IE\left[  h(\delta B+\d \Theta)  \bigl( e^{\Psi(w+z) } - e^{\Psi(w)} \bigr)   
      \prod_{\fb\in \pi} D^{| \fb |} \Psi(w+z)[x_\fb]   \right] \label{expecta_1}  \\
  & \qquad +  \sum_{ \pi\in\cP_{k,2}} \IE\left[  h(\delta B +\d \Theta)    e^{\Psi(w)}   
 \left(   \bigg\{ \prod_{ \fb\in \pi  }   D^{| \fb |} \Psi(w+z)[x_\fb]    
 \bigg\}
            - \prod_{ \fb\in \pi  }   D^{| \fb |} \Psi(w)[x_\fb]   \right) \right]. 
                  \label{expecta_2}
\end{align}

\noi
Let us deal with the above two sums now.

\noi
\begin{itemize}

\item[(i)]  The   sum in \eqref{expecta_1} can be rewritten as
\begin{align} \label{eq:fsum1}
  \sum_{ \pi\in\cP_{k,2}} \IE\left[  h(\delta B+\d \Theta)   e^{\Psi(w)}  D\Psi(w)[z]   
       \prod_{\fb\in \pi}  D^{| \fb |} \Psi(w)[x_\fb]  \right]  + \bigo(\norm{z}^2),
\end{align}

\noi
which is a consequence of \eqref{n1:2}, \eqref{eq:F3}, and 
the fact that $ D\Psi(w)[z]  = \bO(\|z\|)$.

\item[(ii)]

In \eqref{expecta_2},
the  expectation vanishes if 
$\pi\in\cP_{k,2}$ is a partition  with all blocks 
having  \emph{exactly} $2$ elements, 
since $D^{2} \Psi(y)$ does not depend on~$y$. 
Suppose now that the  partition $\pi\in\cP_{k,2}$ 
contains $\ell$ blocks with exactly one element, 
for some $\ell\in\{1,..., k\}$  
(say the blocks $\{1\}, ..., \{ \ell\}$).   
Then,   it follows from \eqref{eq:F3} that

\noi
\begin{align*}
&\bigg\{ \prod_{ \fb\in \pi  } D^{| \fb |} \Psi(w+z)[x_\fb] \bigg\}    
      -  \prod_{ \fb\in \pi  } D^{| \fb |} \Psi(w)[x_\fb]       
\\
&\Eq \left(  \prod_{\fb\in \pi} 
               \big( D^2 \Psi(w)[x_\fb] \big)^{\II_{\{  | \fb | =2  \}}  }  \right) 
 \left(  \sum_{j=1}^\ell  D^2\Psi(w)[z, x_j] \prod_{i\in\{1, ... , \ell\}\setminus \{ j\} } 
           D\Psi(w)[x_i]   \right) \\
&\qquad            + \bO(\|z\|^2). 
\end{align*}
As a consequence, we can write the second sum \eqref{expecta_2} as
\[
  \sum_{ \pi \in \cP_{k+1,2}' }  \IE\left[  h(\delta B+\d \Theta)    e^{\Psi(w)}    
       \prod_{ \fb\in \pi  }  D^{| \fb |} \Psi(w)[x_\fb]  \right] + \bigo(\norm{z}^2),
\]
where $ \cP_{k+1,2}'$ is the set of all partitions of 
$\{1, ... , k, k+1\}$ ($x_{k+1} =z$) 
whose blocks have at most~$2$ 
elements, and such that $k+1$ 
(that corresponds to~$z$) 
belongs to a block of size~$2$. 
Since the first term~\eqref{eq:fsum1} 
accounts for the partitions 
where $k+1$ is in a block of size $1$, 
we have just established formula \eqref{formula_kk} for $n=k+1$, 

\end{itemize}

\noi
Hence by mathematical induction, the formula \eqref{formula_kk} holds 
true for all $n\geq 1$. In particular, we prove that
the function $h_{\eps, \d}$ has infinitely many bounded Fr\'echet derivatives. 

\medskip

It remains to prove the   bounds on the derivatives given in
\eqref{h-deriv-bnd},    
\eqref{eq:hepsdelnorm},
\eqref{eq:hepsdelD2}, 
\eqref{D_kn}, 
and   
\eqref{eq:hsmoepsdelD2}.
Since the random variable $e^{\Psi(w)}$ in  \eqref{formula_kk} is 
not uniformly bounded in $L^p(\Omega)$ for $p>1$, 
which makes a direct proof of the bounds more awkward, 
we undo the change of measure, and work with an equivalent version 
of  \eqref{formula_kk}:

\noi
\begin{align} \label{eq:kk}
D^n h_{\eps, \delta}(w) [x_1, ... , x_n]  
= \IE\left[  h(w_\eps+\delta B + \d \Theta)   \left( \sum_{ \pi\in\cP_{n,2}}  
  \prod_{\fb\in \pi}  \widehat{D}^{| \fb |} \Psi(w) [x_\fb]  \right)  \right],
\end{align} 

\noi
where 

\noi
\begin{align}
\begin{aligned}
\widehat{D} \Psi(w) [x] 
&= \frac{1}{\d}\int_0^T \nabla x_\eps(s) dB(s) + \frac{1}{\d} \langle \Theta, x_\eps(0) \rangle  ,
\\
\widehat{D}^2 \Psi(w) [x, y] 
&= D^2\Psi(w) [x, y] \quad\text{as in \eqref{D2Psi}.}
\end{aligned}
\label{def:whD}
\end{align}

\noi

The equivalence between \eqref{eq:kk} 
and  \eqref{formula_kk}  essentially follows from the Cameron-Martin formula. 
However,  
there is a stochastic integral with respect to 
$B-\delta^{-1}w_\eps$ in $D\Psi$ (which should become
 an integral with respect to $B$ under the change of measure, leading to 
 $\widehat D\Psi$), and so some additional justification 
 may be desired. 
We provide a proof in the Appendix~\ref{APP1}.
Let us now apply the formula  \eqref{eq:kk} 
to establish the bounds \eqref{h-deriv-bnd},    \eqref{eq:hepsdelnorm},
\eqref{eq:hepsdelD2}, \eqref{D_kn}, and   \eqref{eq:hsmoepsdelD2}. 
Without loss of generality, we assume that 
$| h(y) | \leq 1$ for any $y\in\ID$.
Let us first prove the bound \eqref{h-deriv-bnd}.
Using \eqref{use_normal}, \eqref{eq:smthbd}, 
and \eqref{def:whD}, we have

\noi
\begin{align}
\begin{aligned}
\E\big[  | \widehat{D}\Psi(w)[x]    |^2 \big]
&= \frac{1}{\d^2} \int_0^T | \nabla x_\eps(s)|^2 ds + \frac{1}{\d^2} |x_\eps(0)|^2 \\
&\leq   (T+\eps^2)\frac{1}{\eps^2\d^2} \|x\|^2 ,
\end{aligned}
\label{Y4}
\end{align}

\noi
and

\noi
\begin{align}
\begin{aligned}
  | \widehat{D}^2\Psi(w)[x, y]    | 
&\leq   (T+\eps^2)\frac{1}{\eps^2\d^2}   \| x \| \cdot \|y\| .
\end{aligned}
\label{Y5}
\end{align}

\noi
Therefore, we can deduce from \eqref{Y4}, \eqref{Y5}, and \eqref{eq:kk}
that

\noi
\begin{align}\label{eq:derivbd}
\sup_{w\in\ID}\norm{ D^k  h_{\eps,\d}(w)} 
            \Le   (  T +\eps^2  )^{k/2}  \frac{1}{\eps^k\d^k}       \E\bigl[ |G|^k \bigr]   \times\text{card}(\cP_{k,2}).
\end{align}
Thus, the bound \eqref{h-deriv-bnd} is proved, where we can choose
 the constant~$C_k$   to be 
$ \E\big[ |G|^k \big] \times\text{card}(\cP_{k,2})$ with $G\sim \mathcal{N}(0,1)$,
and hence 
\begin{align}\label{CST_C12}
 C_1 = \sqrt{2/\pi},
 \quad {\rm and}
 \quad
 C_2 = 3.
 \end{align} 

\smallskip
 
Next, we prove the bound \eqref{eq:hepsdelnorm}. Using  \eqref{eq:derivbd} directly yields
that 

\noi
\begin{align}\label{by1}
 \sup_{w\in\ID}\norm{D h_{\eps, \delta}(w)} 
 \Le \frac{1}{\eps\d} 
\sqrt{ T+ \eps^2 }  .
\end{align}

\noi
For $k=2,3$, we can do better than simply applying \eqref{eq:derivbd}. 
We first deduce from  the formula~\eqref{eq:kk} with $n=2$, and \eqref{def:whD} that

\noi
\begin{align} \begin{aligned} 
 & D^2 h_{\eps, \delta}(w) [x,y] 
  \Eq  
   \E\Big[ h(w_\eps + \d B + \d \Theta) \widehat{D}\Psi(w)[x] \widehat{D}\Psi(w)[y]  \Big] \\
  &\qquad\qquad\qquad\qquad
     +   \E\Big[ h(w_\eps + \d B + \d \Theta) \widehat{D}^2\Psi(w)[x, y]  \Big] \\
  &\Eq \delta^{-2} \, \IE\bigg[ h( w_\eps + \delta B  +\d \Theta )  
       \left( \int_0^T \nabla x_\eps(s)\, dB(s) + \langle \Theta, x_\eps(0) \rangle  \right)\\
  &\qquad     \times
       \bigg( \int_0^T \nabla y_\eps(s)\,dB(s) + \langle \Theta, y_\eps(0) \rangle \bigg)  \bigg]   
        \\
  &\qquad    -  \delta^{-2} \,  \E\bigl[ h(w_\eps + \delta B + \d \Theta) \bigr] 
             \bigg(     \int_0^T \langle \nabla x_\eps(s), \nabla y_\eps(s)\rangle   ds      
             + \langle x_\eps(0), y_\eps(0) \rangle \bigg)
                   \\
  &\Eq   \delta^{-2} \,  {\rm Cov}\bigg[   h(w_\eps + \delta B+\d \Theta),   
          \left( \int_0^T \nabla x_\eps(s)\, dB(s) + \langle \Theta, x_\eps(0) \rangle \right) \\
  &\qquad\qquad\qquad\qquad\qquad\qquad  \qquad \times        
          \bigg( \int_0^T \nabla y_\eps(s)\, dB(s) + \langle \Theta, y_\eps(0) \rangle \bigg)  \bigg].  
 \end{aligned}
    \label{claim_2nd_D}
\end{align}


\noi
From this, and because $|h(y)| \le 1$ for all~$y \in \ID $, it easily follows from Cauchy--Schwarz that

\noi
\begin{align}
\begin{aligned}
&\big\vert D^2 h_{\eps, \delta}(w) [x,y]\big\vert \\
&\leq 
    \delta^{-2}   \sqrt{ {\mathrm{Var}} 
             \left[ \left( \int_0^T \nabla x_\eps(s)\, dB(s) 
             + \langle \Theta , x_\eps(0) \rangle  \right)
   \left( \int_0^T \nabla y_\eps(s)\, dB(s) + \langle \Theta, y_\eps(0) \rangle  \right)   \right]}.
\end{aligned}\label{eq:d2intbd}
\end{align}

\noi
Now, for $(U,V)$ bivariate normal with mean zero, variance~$1$ and correlation~$\rho$,
$\mathrm{Var}(UV) = 1 + \rho^2 \le 2$.
Hence, and since, from \eqref{use_normal} and~\eqref{eq:smthbd}, for any~$z \in \ID$, we have
\ban{
 \s^2_{z,\eps,T} &\Def {\mathrm{Var}} \left( \int_0^T \nabla z_\eps(s)\, dB(s)
             + \langle \Theta , z_\eps(0) \rangle  \right) \notag\\
    &\Eq \int_0^T \big\vert\nabla z_\eps(s)\big\vert^2\,ds 
    + |z_\eps(0)|^2  \Le \Bigl(\frac T{\eps^2} + 1\Bigr) \|z\|^2,
}
it follows that
\[
  \bigl| D^2 h_{\eps, \delta}(w) [x,y]\bigr|^2 \Le \frac2{\d^4}\s^2_{x,\eps,T} \s^2_{y,\eps,T}
             \Le \frac2{\eps^4\d^4}\,(T + \eps^2)^2\,\|x\|^2\|y\|^2,
\]
and hence that

\noi
\begin{align}\label{by2}
    \sup_{w\in\ID}\norm{D^2 h_{\eps, \delta}(w)} 
    \Le   \frac{\sqrt{2}}{\eps^2 \d^2}  (  T+ \eps^2 ).
\end{align}

\noi
For the Lipschitz constant of the second derivative, we claim that

\noi
\begin{align}
\sup_{w\in\ID}\frac{\norm{D^2h_{\eps, \delta}(w+v) -D^2 h_{\eps, \delta}(w)}}{\norm{v}}
  & \Le  
  \sqrt{50/\pi} \,  \frac{1}{\eps^3\d^3}  (  T + \eps^2  )^{\frac{3}{2}}  .
  \label{by3}
  \end{align}

\noi
Using the definition of the derivative,  \eqref{eq:kk} (in which 
$\widehat{D}\Psi(w)$ does not depend on $w$), \eqref{Y4}, and \eqref{Y5},
we have

\noi
\begin{align}  \label{mark} 
\begin{aligned}
 & \babs{D^2h_{\eps, \delta}(w +x_3)[x_1,x_2] -D^2 h_{\eps, \delta}(w)[x_1,x_2]}  \\
	&\Le \int_0^1 \babs{D^3 h_{\eps, \delta}(w + t x_3)[x_1,x_2,x_3] } dt  \\
	&\Le  \sup_{w\in\ID}   \bbbclr{\IE\bbbcls{\sum_{i=1}^3
	\bbabs{ \widehat{D} \Psi(w ) [x_i] \widehat{D}^2 \Psi(w)\bcls{(x_j)_{j\not=i}}}}
                                +\IE\bbbcls{ \prod_{j=1}^{3}\bbabs{\widehat{D}\Psi(w)[x_j]}}}   \\ 
	&\Le  
	 \frac{1}{\d^3} \big( \frac{T}{\eps^2}+1 \big)^{\frac{3}{2}}
	  \big( 3 \E[ |G| ] + \E[ |G|^3 ]   \big)   \prod_{i=1}^3 \norm{x_i} 
	  \quad\text{with $G\sim\mathcal{N}(0,1)$ }\\
& \Eq
 \sqrt{50/\pi} \,  \frac{1}{\d^3} \Big( \frac{T}{\eps^2}+1 \Big)^{\frac{3}{2}} 
 \prod_{i=1}^3 \norm{x_i} .
\end{aligned}
\end{align}

\noi 
Then, the claim \eqref{by3} follows immediately. 
Therefore, the bound \eqref{eq:hepsdelnorm}
on $\| h_{\eps, \d}\|_{M^0}$ follows from \eqref{eq:M0},
\eqref{by1}, \eqref{by2}, and \eqref{by3}.
Now to see \eqref{eq:hepsdelD2}, it is enough to apply 
  the  Cauchy--Schwarz inequality and~\eqref{claim_2nd_D}
in almost the same way as in the calculations leading to~\eqref{eq:d2intbd}.

\medskip

Next, we show the bound \eqref{D_kn}, under the additional assumption that $\| D^n h\| < \infty$. 
First, we note that the sum inside the expectation in~\eqref{eq:kk}  

 \noi
\begin{align} \label{def_Tn}
\mathcal{T}_n \Def \sum_{ \pi\in\cP_{n,2}}  
            \prod_{\fb\in \pi} \widehat{D}^{| \fb |} \Psi(w)  [x_\fb] 
\end{align}

\noi
does not depend on $w$.
 We now show by induction that, for $0 \le r \le n$,
\ben{\label{eq:induction-for-D}
    D^{k+r} h_{\eps, \delta}(w )[x_1,\ldots, x_k,z_1,\ldots,z_r]
        \Eq \IE\Bigl[  D^r h(w_\eps  + \delta B + \d \Theta) [ z_{1,\eps},\ldots, z_{r,\eps}] \mathcal{T}_k \Bigr],
}
where $z_{j,\eps} = (z_j)_\eps$ is defined according to \eqref{def_w_eps}, the case $r=0$ being just~\eqref{eq:kk}.
Then, assuming that~\eqref{eq:induction-for-D} is true for~$r$,

\noi
\begin{align}\label{Y6}
\begin{aligned}
& D^{k+r+1} h_{\eps, \delta}(w )[x_1,\ldots, x_k,z_1,\ldots,z_r, v]  \\
        &\Eq  \frac{d}{dt}\Big\vert_{t=0} D^{k+r} h_{\eps, \delta}(w + tv)[x_1,\ldots, x_k,z_1,\ldots,z_r]   \\
        &\Eq  \frac{d}{dt}\Big\vert_{t=0}  
 \IE\bigl[D^r h(w_\eps + t v_\eps + \delta B + \d \Theta)[ z_{1,\eps},    \ldots, z_{r,\eps}] 
  \mathcal{T}_{k} \bigr].  
\end{aligned}
\end{align}

\noi
For $r < n$, since $\norm{D^n h} < \infty$, 
\[
\sup_{y: \|y\| \le 1}\|D^{r+1} h(w_\eps + y + \delta B+\d \Theta)\|
\]
 is 
bounded by a polynomial of degree~$n-r-1$ in $\d\|B+\Theta\|$, and hence its product with~$|\mathcal{T}_k|$ is integrable\footnote{Fernique's theorem applied to the Gaussian
process $B + \Theta$ yields exponential integrability of $\| B + \Theta\|$, while
the term  $\mathcal{T}_k$ lives in the first two Wiener chaoses and thus admits 
finite moments of any order (see Section 2.8 of \cite{Nourdin2012}). These two observations imply the integrability of 
$\d\|B+ \Theta\|\cdot |\mathcal{T}_k|$.
}, by
Fernique's theorem  (see \cite[Theorem 2.8.5]{Bogachev}). 
Hence, we deduce from the dominated convergence theorem  that 

\noi
\begin{align}\label{Y7}
\begin{aligned}
& \frac{d}{dt}\Big\vert_{t=0}  
 \IE\bigl[D^r h(w_\eps + t v_\eps + \delta B+\d \Theta)[ z_{1,\eps},\ldots,   z_{r,\eps}]
     \mathcal{T}_{k} \bigr] 
     \\
 &   \Eq \IE\bigl[D^{r+1} h(w_\eps  + \delta B+\d \Theta)[ z_{1,\eps},\ldots,     z_{r,\eps},v_\eps] 
     \mathcal{T}_{k} \bigr],
\end{aligned}
\end{align}

\noi
establishing~\eqref{eq:induction-for-D} for $r+1$ also.
The bound~\eqref{D_kn} follows from~\eqref{eq:induction-for-D} with $r=n$,
using \eqref{use_normal} and~\eqref{eq:smthbd},
as in proving~\eqref{eq:derivbd}. Finally, we point out that
the inequality~\eqref{eq:hsmoepsdelD2} follows from~\eqref{eq:induction-for-D} 
with $k=r=1$,  \eqref{Y4}, and the fact that
$\widehat{D}\Psi(w)[x]$ is Gaussian:

\noi
\begin{align*}
\big\vert D^2 h_{\eps, \d}(w)[z, x] \big\vert 
&
\Le 
\| Dh\|  \|z\|  \E \big\vert \widehat{D}\Psi(w)[x] \big\vert \\
&
\Le
\E[ |G| ]   \frac{1}{\d}  \| Dh\|   \|z\|
\left( \int_0^T | \nabla x_\eps(s) |^2 ds + | x_\eps(0)|^2 \right)^{\frac12},
\end{align*}

\noi
with $G\sim \mathcal{N}(0,1)$,
which concludes our proof. 
\qedhere

\end{proof}

Now we present the proof of Theorem~\ref{thm:smoothg}.

\begin{proof}[Proof of Theorem~\ref{thm:smoothg}]
First note that, if $h\colon \ID\to \IR$ is the indicator of a measurable set 
and $\eps,\delta$ are positive,
then Lemma~\ref{lem:smoothbm} and Remark~\ref{rem_15} imply that 
$h_{\eps,\delta}\in M^0_{\mathfrak{c}_1}$, with $\mathfrak{c}_1$ given in \eqref{MC1}, and that
\ben{\label{eq:hsmoothbd}
  \norm{h_{\eps, \delta}}_{M^0}  \Le    C_0,
}

\noi
with $C_0 = C_0(\eps, \delta)$   as in \eqref{defC0} and \eqref{CedT}.
Next, we upper and lower bound $ \mathbb{P}( X\in K ) - P( Z \in K ) $ 
for $K$ a measurable subset of $\ID$. 
For any $\theta, \gamma>0$, we have
\ban{ 
 \mathbb{P}( X\in K )
 &\Le 
 \mathbb{P}( X_{\eps} + \delta B +\d \Theta \in K^{\theta+\gamma}, 
 \norm{X_{\eps}+\delta B +\d \Theta - X} < \theta + \gamma)  
 \notag \\
 & \qquad               + \mathbb{P}( \norm{X_{ \eps}+\delta B +\d \Theta - X} \geq \theta+ \gamma )   
\notag \\
 &\Le   \mathbb{P}( X_{\eps} + \delta B+\d \Theta \in K^{\theta + \gamma})  
             - \mathbb{P}( Z_{\eps} + \delta B +\d \Theta \in K^{\theta + \gamma})  \label{eq:upbdsm1} 
             \\
 & \quad + \mathbb{P}( \norm{X_{ \eps}+\delta B +\d \Theta - X} \geq \theta + \gamma )   
      + \mathbb{P}( Z_{\eps} + \delta B +\d \Theta\in K^{\theta + \gamma}). \label{eq:upbdsm2}
}
The first term~\eqref{eq:upbdsm1} is of the form
 $\IE[ h_{\eps,\delta}(X)] -\IE[ h_{\eps, \delta}(Z)]$ for~$h$ as
above, and is thus upper bounded by 
$C_{\eps,\d,T} \k_1 +  \sqrt{(1 +\tsfrac{\eps}{2})(T+\eps^2)}  (\eps \delta)^{-2} \k_2$.
 The first part of the second term~\eqref{eq:upbdsm2} is upper bounded by
\ba{
 & \mathbb{P}( \norm{X_\eps+\delta B +\d \Theta- X} \geq \theta + \gamma )  \\
	&\Le 	\mathbb{P}( \norm{X_{ \eps} - X} \geq \theta  ) 
	     +\mathbb{P}( \norm{\delta B + \d \Theta} \geq  \gamma )  \\
	&     \Le 	\mathbb{P}( \norm{X_{ \eps} - X} \geq \theta  ) 
	     +\mathbb{P}( \norm{\delta B} \geq  \gamma/2 )
	     + \mathbb{P}( |\d \Theta| \geq  \gamma/2 )   \\
	 &\Le \mathbb{P}( \norm{X_{\eps} - X} \geq \theta  )  
	     + 2d e^{-\gamma^2/(8dT\delta^2)} 
	     + d   e^{-\gamma^2/(4d\delta^2)}     ;
}
in the third inequality we have used the bound  
\begin{align*}
  \IP\bclr{ \norm{B} \geq z} 
  & \Le  \IP\left(  \bigcup_{i=1}^d  \Bigl\{  \| B^{(i)}\|  \geq  \frac{z}{ \sqrt{d}}  \Bigr\} \right)  
      \Le d \, \IP\left( \| B^{(1)}\|  \geq  \frac{z}{ \sqrt{d}} \right)    \\
  & \Le 2 d \, \IP\left(\max_{0\leq t \leq T} B^{(1)}(t) \geq z d^{-1/2} \right) 
          \Le 4d\,  \IP\big(B^{(1)}(T) \geq z d^{-1/2} \big) \\
          & \Le 2d \exp\Bigl(-\frac{z^2}{2dT} \Bigr),
\end{align*}

\noi
for any $z>0$, which follows from well known facts about Brownian motion.
For the second term of~\eqref{eq:upbdsm2}, we have
\besn{\label{eq:dd22}
& \mathbb{P}( Z_{\eps} + \delta B  +\d \Theta \in K^{\theta + \gamma})  \\
 	&\Le  \mathbb{P}( \norm{Z_{\eps} + \delta B +\d \Theta - Z} \geq \theta + \gamma) + \IP(Z \in K^{2(\theta + \gamma)}) \\
	&\Le \mathbb{P}( \norm{Z_{\eps} - Z} \geq \theta) 
                     + \IP(\norm{\delta B + \d \Theta}\geq  \gamma) + \IP(Z \in K^{2(\theta + \gamma)}) \\
        &\Le \mathbb{P}( \norm{Z_{\eps} - Z} \geq \theta)+ \IP(Z \in K^{2(\theta + \gamma)})  + 2d e^{-\gamma^2/(8dT\delta^2)} 
	     + d   e^{-\gamma^2/(4d\delta^2)}  .
}
Combining the last displays and using that $T\geq 1$, we find that
\besn{\label{eq:LP-bnd}
\IP(X\in K)
	&\leq C_0\k_1 + \mathfrak{c}_1 \k_2 + \mathbb{P}( \norm{X_{ \eps} - X} \geq \theta  )
                        + \mathbb{P}( \norm{Z_{\eps} - Z} \geq \theta)\\
	&\quad
	+ \IP(Z \in K^{2(\theta + \gamma)}) + 6d e^{-\gamma^2/(8dT\delta^2)}. 
}
Subtracting  $\IP(Z\in K)$ from both sides gives an upper bound on 
$\IP(X\in K) - \IP(Z\in K)$ of the form  \eqref{the_bdd}, with 
$\IP(Z \in K^{2(\theta + \gamma)}\setminus K)$ in place of $\IP(Z \in K^{2(\theta + \gamma)}\setminus K^{-2(\th+\g)})$.
A lower bound of the same magnitude follows in analogous fashion.
\ignore{
\ban{
\IP(X \in K ) 
	&\geq P( X_{\eps} + \delta B \in K^{-(\theta + \gamma)}, \norm{ X_{\eps} + \delta B - X} < \theta + \gamma) \notag\\
	 &\geq \mathbb{P}( X_{\eps} + \delta B \in K^{-(\theta + \gamma)})  - \mathbb{P}( Z_{\eps} + \delta B 
       \in K^{-(\theta + \gamma)})  \label{eq:lobdsm1} \\
 &\hspace{2cm} - \mathbb{P}( \norm{X_{ \eps}+\delta B - X} \geq \theta + \gamma )   
                 + \mathbb{P}( Z_{\eps} + \delta B \in K^{-(\theta + \gamma)}). \label{eq:lobdsm2}
}
Just as for the upper bound, the first term~\eqref{eq:lobdsm1} is bounded
\be{
\mathbb{P}( X_{\eps} + \delta B \in K^{-(\theta + \gamma)})  
     - \mathbb{P}( Z_{\eps} + \delta B \in K^{-(\theta + \gamma)}) \geq - C_0 \kappa,
}
and the first part of~\eqref{eq:lobdsm2} is bounded
\ba{
-\mathbb{P}( \norm{X_{ \eps}+\delta B - X} \geq \theta + \gamma )  
	 &\geq- \mathbb{P}( \norm{X_{ \eps} - X} \geq \theta  )  
	 - 2d e^{-\gamma^2/(2d\delta^2)}.
}
For the second part of~\eqref{eq:lobdsm2}, note that
\ba{
\mathbb{P}( Z_{\eps} + \delta B \in K^{-(\theta + \gamma)})
	&=1-\mathbb{P}\bclr{Z_{\eps} + \delta B \in \clr{K^c}^{\theta + \gamma}}\\
	&\geq 1 - \IP\bclr{Z \in (K^c)^{2(\theta + \gamma)}}
               -\mathbb{P}( \norm{Z_{\eps} - Z} \geq \theta) -2d e^{-\gamma^2/(2d\delta^2)} \\
	&= \IP\clr{Z \in K^{-2(\theta + \gamma)}}-\mathbb{P}( \norm{Z_{\eps} - Z} \geq \theta) -2d e^{-\gamma^2/(2d\delta^2)},
}
where in the inequality we have used the bounds of~\eqref{eq:dd22} with $K$ replaced by $K^c$. 
Now the bound \eqref{the_bdd} follows by combining the  above estimates.
}

\medskip

Finally, suppose that $h\colon \ID\to \IR$ is bounded and Lipschitz with { $\sup\{  |h(w)| : w\in\ID\} \leq 1$} and 
$\|Dh\| \le 1$. Then for any  $\eps, \delta > 0$, Lemma~\ref{lem:smoothbm} and Remark~\ref{rem_15} imply that 
$h_{\eps,\delta}\in M_{\mathfrak{c}_2  }^0$ with 
$\mathfrak{c}_2$ given as in \eqref{MC2},  
and we also have

\noi
\begin{align*}
  \IE\bigl[ h(X) - h(Z) \bigr]  
  &= \IE\bigl[ h(X)   - h(X_\eps + \delta B + \d\Theta) \bigr] 
      + \IE\bigl[ h_{\eps, \delta}(X) - h_{\eps, \delta}(Z) \bigr]  \\
 &\quad      +\IE\bigl[ h(Z_\eps + \delta B + \d\Theta) - h(Z) \bigr]. 
\end{align*}
The first expectation is bounded by 
$ \IE\big( \| X - X_\eps\| + \delta \| B + \Theta \| \big)$, 
and the third  by  $ \IE\big( \| Z - Z_\eps\| + \delta \| B+\Theta \| \big)$;  
the second is bounded by 
$\kappa_1 \norm{h_{\eps,\delta}}_{M^0} 
+ \mathfrak{c}_2\kappa_2$, 
by assumption \eqref{eq:thmhypbd}.
Now we claim that the following bounds hold
when  $\sup\{  |h(w)| : w\in\ID\} \leq 1$  and $\| Dh \| \leq 1$:

\noi
\begin{align*}
 {\rm (i)}& \quad \| h_{\eps, \delta}\| \Le 1,  \\
    {\rm (ii)}& \quad   \| Dh_{\eps, \delta }\| \Le  1 , \\
 {\rm (iii)}& \quad   \| D^2h_{\eps, \delta}\| \Le (T+\eps^2)^{\frac{1}{2}} (\eps\d)^{-1}.
 \end{align*}

\noi
Claim (i) is trivial, and to verify claim (ii),
we begin by writing 
\begin{align*}
Dh_{\eps,\d}(w)[v]
&= \frac{d}{dt}\Big\vert_{t=0} h_{\eps, \d}(w+tv) \\
&= \frac{d}{dt}\Big\vert_{t=0}  \E\big[ h(w_\eps + tv_\eps + \d B + \d \Theta) \big]\\
&= \E\Big( Dh(w_\eps  + \d B + \d \Theta)[v_\eps] \Big ),
\end{align*}

\noi
which follows from the same reasoning as in \eqref{Y6}-\eqref{Y7}.
Then, claim (ii) follows from $\| Dh\| \leq 1$ and $\|v_\eps\| \leq \|v\|$.
Note that    claim (iii) follows from \eqref{D_kn}.

\smallskip

Note that we can     deduce from \eqref{mark} and \eqref{D_kn} that

\noi
\begin{align*}
& \bigl\vert D^2 h_{\eps, \delta}(w+x_3)[x_1, x_2] - D^2 h_{\eps, \delta}(w)[x_1, x_2] \bigr| 
   \\
   &\Le \| D^3h_{\eps, \delta} \|   \cdot  \prod_{j=1}^3\|x_j\|
    \Le  \| Dh\| C_2   (T +\eps^2) \eps^{-2} \delta^{-2}   \cdot  \prod_{j=1}^3\|x_j\| \\
   &\Le  3 (T +\eps^2) \eps^{-2} \delta^{-2}   \cdot  \prod_{j=1}^3\|x_j\|.
\end{align*}
See \eqref{CST_C12} for the choice of $C_2$.
Thus, if $\eps, \delta\in(0,1)$, we see that

\noi
\begin{align*}
   \| h_{\eps, \delta} \| _{M^0} 
   & \Le   1 +  1 + (T+1)^{1/2}\eps^{-1} \delta^{-1} + 3 (T +1) \eps^{-2} \delta^{-2}   \\
     &\Le   2 + 4(T+1) (\eps\d)^{-2}
     \Le    4(T+2) \eps^{-2} \delta^{-2}   .
\end{align*}

\noi
Since $\IE \|B_{[0,T]}\| \le T^{1/2}\IE \|B_{[0,1]}\|$
and $\E|\Theta| \leq \sqrt{d}$,
 the above bounds lead us to the 
desired estimate~\eqref{des_bdd}. 
Hence, the proof is completed. 
\qedhere
\end{proof}

\medskip

The rest of this section is devoted to the proofs
of  Lemma~\ref{L1} and Lemma~\ref{L3}.

\begin{proof}[Proof of Lemma~\ref{L1}]
The proof uses the standard dyadic techniques from, for example,  \cite[Chapter 3]{Billingsley1999}.
Letting $\o_x(\eps)[0,T]$ be as in~\eqref{def_mod}, we can bound
\begin{align*}
  \mathbb{P}\bclr{\norm{Y_{\eps}- Y} \geq \sqrt{d}   \l }   
    &\Le  \IP\left( \bigcup_{i=1}^d   \Big\{  \| Y^{(i)}_{\eps}- Y^{(i)} \| \geq  \l  \Big\} \right) \\
    &\Le \sum_{i=1}^d   \mathbb{P}(\o_{Y^{(i)}}(\eps)[0,T] \geq   \l ).
\end{align*}

\ignore{
\noi
 And in the same way, we have 
\noi
\begin{align*}
  \mathbb{P}\bclr{\norm{Y_{\eps}- Y} \geq \sqrt{d}   \l }   
    & 
    \Le \sum_{i=1}^d   \mathbb{P}(\o_{Y^{(i)}}(\eps)[0,T] \geq   \l/2 ).
\end{align*}
}

\noi
Thus it suffices to bound $\mathbb{P}(\o_{Y}(\eps)[0,T] \geq   \l )$
for a one-dimensional process~$Y$ satisfying the assumption~\eqref{L1-1st-cond}
of Lemma~\ref{L1}.

\medskip

Fix~$y\in \ID([0,T]; \IR)$ and    $(s , \e) \in [0,T] \times(0,\infty)$, we set  
\[
x_{j,r} := x_{j,r}(s,\e) := s +  j2^{-r} \e, 
\quad\text{for    $0 \le j \le 2^r$ and   $1 \leq r \leq R$}
\]  

\noi
and define $\d^*_{jn} := \d^*_{jn}(y)$ and~$\d_{jr} := \d_{jr}(y,s,\e)$ by
\ba{
   \d^*_{jn}  &\Def  \sup_{j/n \le v \le (j+1)/n}|y(v) - y(jn^{-1})|,\quad 0 \le j < nT; \\
      \d_{jr}   &\Def  \min\big\{|y(x_{j,r}) - y(x_{j,r} - 2^{-r}\e)|,
                              |y(x_{j,r}) - y(x_{j,r} + 2^{-r}\e)| \big\} ,\quad j \mbox{\, odd}, 1 \le j < 2^r,                           
}

\noi
with $\d_{jr} = 0$ if $j$ is even (set $y(v) := y(T)$ for $v > T$); 
note that, for~$j$ odd,
\[
      x_{j,r} - 2^{-r}\e \Eq x_{\lfloor j/2 \rfloor,r-1} \qquad\mbox{and}\qquad  
              x_{j,r} + 2^{-r}\e \Eq x_{\lfloor j/2 \rfloor + 1,r-1}.      
\]

\noi
Then, defining $R := R_{n,\e} := \lceil \log_2(n\e) \rceil$,
we first establish that, for $s \le u \le s+\e$,
\ben{\label{y-differences-bnd}
   |y(u) - y(s)| 
   \Le 3\left\{ \left( \sum_{r=1}^{R} \max_{1 \le j < 2^r} \d_{jr} \right) 
   +  { 2}  \max_{0 \le j < nT} \d^*_{jn}\right\}.
}

The argument to show~\eqref{y-differences-bnd} is based on the following two observations.
First, the triangle inequality can be used to
bound the \emph{minimal change} in the value of~$y$ when going
from an argument of the form ${ x_{j,r}}$ to one of the form~${ x_{j', r-1}}$, 
which is no more than 
$\max_{1 \le j < 2^r}\d_{jr}$.  Secondly,  the change when going from {\it any} value
 in~$[s,s+\e]$ to the next smaller
value~$ {s +} j2^{-R} { \e}$ is bounded by~${ 2 \max_{0\leq j<nT}  \d^*_{jn} }$. 

 As a result of these observations, for any $s \le u \le s+\e$, there is a path from $u$ to~$x_{j_0, 0}$
of the form $(u,x_{j_R,R},x_{j_{R-1},R-1},\ldots,x_{j_1,1}, x_{j_0, 0})$, 
where $x_{j_0, 0}  \in \{s,s+\e\}$, 
along which the value of~$y$ changes in total by no more than
\[
\mathcal{K} 
\Def  \left( \sum_{r=1}^R \max_{1 \le j < 2^r} \d_{jr} \right) 
+ { 2} \max_{0 \le j < nT} \d^*_{jn}.
\]  

\noi
We call such a path $(u,x_{j_R,R},\ldots,x_{j_1,1}, x_{j_0, 0})$
\emph{admissible}.
Then, if $J$ denotes the maximal value of $j$ such that
 there is an admissible path 
from $x_{j,R}$ to $s$ with   $ |y(x_{j,R}) - y(s)| \le    \mathcal{K} $. 
 It is immediate that $|y(x_{j,R}) - y(s)| \le    2 \mathcal{K} $
for all $0 \le j \le J$, because an   admissible path 
from $x_{j,R}$ to~$s+\e$
 has to cross    an admissible  path from $x_{J,R}$ to~$s$ 
 in this case and can be modified to follow the   admissible  path 
 from $x_{J,R}$ to~$s$ thereafter.  
 For each $j > J$, 
 we can find  an admissible path $\Gamma_j$ from $x_{j,R}$ to $s+\e$. 
\begin{itemize}
\item

 If $\Gamma_j$ crosses the admissible path from $x_{J,R}$ to $s$, 
 then from the triangle inequality it follows immediately that 
\[
\big\vert y(x_{j,R} ) - y(s) \big\vert   \leq 2\mathcal{K}.
\]

\item If $\Gamma_j$ does not intersect with the admissible path
 from $x_{J,R}$ to $s$, we deduce from    the triangle inequality that 
 
 \noi
\begin{align*}
\big\vert y(x_{j,R} ) - y(s) \big\vert  
& \leq \big\vert y(x_{j,R} ) - y(s+\e)  \big\vert + \big\vert y(x_{J+1,R} ) - y(s+\e)  \big\vert  \\
&\qquad\qquad \quad +  \big\vert y(x_{J+1,R} ) - y(x_{J,R} )  \big\vert 
+ \big\vert y(x_{J,R} ) - y(s )  \big\vert  \\
&\leq 3  \left( \sum_{r=1}^R \max_{1 \le j < 2^r} \d_{jr} \right)  + 2  \max_{0 \le j < nT} \d^*_{jn}, 
\end{align*}
where we also used  the bound $ |y(x_{J,R}) - y(x_{J+1,R})|  \leq   2   \max_{0 \le j < nT} \d^*_{jn}$.
\end{itemize}

\noi
 This verifies \eqref{y-differences-bnd} by noting that
  $| y(u) - y(s) | \leq | y(u) - y(x_{j,R})| +  | y(x_{j,R}) - y(s) |$, 
  where $x_{j,R}$ is the  smaller value next to $u$ such that 
  $| y(u)- y(x_{j,R}) | \leq 2\max_{0 \le j < nT} \d^*_{jn}$.

\smallskip

Now, returning to the process~$Y$, note that
\[
    \{\o_Y(\eps)[0,T] \ge \l\}
    \Def \left\{\sup_{0 \le s,t \le { T}\colon\,|s-t| \le \e}|Y(s) - Y(t)| \ \ge\ \l \right\}
           \subset     \bigcup_{k=1}^{\lceil T/\e \rceil} A_k(\e,\l),
\]

\noi
where
\[
        A_k(\e,\l) \Def   \left\{\sup_{(k-1)\e \le u \le k\e} |Y(u) - Y((k-1)\e)| \ge \l/3 \right\} .
\]

\noi
Then, for $0 < \ps < 1$, because of~\eqref{y-differences-bnd} with $s= (k-1)\e$,
we can write
\[
  A_k(\e,\l) \ \subset\ 
     B(n,T,\l) \cup \bigcup_{r=1}^{R_{n,\e}} \bigcup_{j=1}^{2^r}  A_{k,r,j,\e,\l},
\]

\noi
where
\[
        A_{k,r,j,\e,\l} \Def  \bigl\{ \d_{jr}(Y,(k-1)\e,\e) \ge \l\ps^r(1-\ps)/9 \bigr\}
\]

\noi
and 
\[
     B(n,T,\l) \Def \bigcup_{{j=1}}^{\lceil nT \rceil} \bigl\{  {2}\d^*_{jn}(Y) \ge \l(1-\ps)/9 \bigr\},
\]

\noi
for some $\psi\in(0,1)$ to be fixed later such that 
\[
      \l/9 \ >\  \l(1-\ps)/9 + \sum_{r = 1}^{R_{n,\e}} \l\ps^r(1-\ps)/9.
\]

\noi
Computing probabilities using \eqref{L1-1st-cond}   gives
\[
     \pr[A_{k,r,j,\e,\l}] \Le \frac{K(2^{-r { +1}}\e)^\b}{(\l\ps^r(1-\ps)/9)^\g} 
                \Eq K\Bigl(\frac{9}{1-\ps}\Bigr)^\g \frac{  (2\e)^\b}{\l^\g}\,(2^\b \ps^\g)^{-r},
\]

\noi
and
\[
      \pr[B(n,T,\l)] \Le  2T\f_n(\l(1-\ps)/{ 18}).
\]

\noi
Hence
\ba{
\pr[\o_Y(\eps)[0,T] \ge \l]
&\le\ \pr[B(n,T,\l)] + \sum_{k=1}^{\lceil T/\e \rceil} \sum_{r=1}^{R_{n,\e}} 
\sum_{j=1}^{2^r}\pr[A_{k,r,j,\e,\l}]
\\
 &\le  2T\f_n(\l(1-\ps)/{18}) + K C(\ps,\b,\g)\lceil T/\e \rceil \frac{\e^\b}{\l^\g}\,,
}

\noi
where $C(\ps,\b,\g) < \infty$ 
provided that~$\psi \in (0,1)$  is chosen 
so that $2^{\b-1} \ps^\g > 1$. 
 The result follows by choosing $\psi=2^{-(\b-1)/(2\gamma)}$.
 \qedhere

\end{proof}

\smallskip

\begin{proof}[Proof of Lemma~\ref{L3}]
Using \cite[(1.12b)]{Rio2013}, for $1 \le m_1 < m_2 \le N$, we have
\ba{
    s^2_{m_1,m_2} 
    &\Def \sum_{i=m_1+1}^{m_2} \sum_{j=m_1+1}^{m_2} |\cov(Y_i,Y_j)| 
   \Le K c_p^2 (m_2 - m_1), 
}

\noi
where
\[
     K \Def 2\Bigl\{1 + 2\sum_{j \ge 1}(kj^{-b})^{(p-2)/p}\Bigr\} \ <\ \infty.
\]
It now follows from \cite[Theorem~6.2]{Rio2013}, 
with $r = 1 + (p-1)b/(p+b)$, that,
for $0 \le s < t \le N/n$, 

\noi
\begin{align}
\begin{aligned}
    \pr\Bigl[&\sup_{s \le u \le t}|X(u) - X(s)| \ge 4\l c_p\Bigr]    \\
           & \Le 4\Bigl\{\Bigl(\frac{Kr(t-s)}{\l^2}\Bigr)^{r/2} 
                  + \frac {\lceil n(t-s) \rceil}{\l \sqrt n} \,\Bigl(\frac{rk^{1/b}}{\l\sqrt n}\Bigr)^{r-1} \Bigr\}   \\
        &    \Le C\,\frac{(t-s)^{r/2}}{\l^r},\phantom{XX} 
           \end{aligned} 
            \label{max-moment-bnd}
\end{align}
with the last line uniformly in $n(t-s) \ge 1/2$, for a suitable constant~$C := C(p,k,b)$.  This in turn
implies~\eqref{Mod-cty-bnd}, by a standard argument.
\end{proof}

\medskip
\noindent{\bf Acknowledgments.} 
 We thank three referees for their suggestions and comments 
that helped improve our paper. We also thank the editor for her remarks.

\appendix

\section{Equivalence between (\ref{eq:kk})  and  (\ref{formula_kk})}
\label{APP1}

In this appendix, we   prove the equivalence between \eqref{eq:kk} 
and   \eqref{formula_kk}.
\begin{proof}

First, recalling the definitions of the derivatives of~$\Psi$ in (2.9) and~(2.10), we write

\noi
\begin{align}
\begin{aligned}
\text{RHS of  \eqref{formula_kk}}
&= \sum_{ \pi\in\cP_{n,2}}  \IE\left[  h(\delta B + \d \Theta)   
 \prod_{\fb\in \pi} \bclr{D^{| \fb |} \Psi(w) [x_\fb] }^{\II[|\fb| =1]  }  \right]  \\
&\qquad \times  \prod_{\fb\in \pi} \bclr{  D^{| \fb |} \Psi(w) [x_\fb]}^{\II[|\fb| =2] },
\end{aligned}
\label{pf:eq1}
\end{align}

\noi
where we used the fact that 
$D^{2} \Psi(w) [x, y]$ is deterministic. 
Suppose $\fb =  \{ i_1, i_2, ..., i_\ell\}$. 
Note that  
the Wiener integral is not pathwise defined, so it prevents us 
from applying the change of measure for $h(\d B + \d \Theta) D\Psi(w)[x]$.
However, we can proceed by an approximation argument.

\begin{itemize}

\item[(i)]  For each $j\in\{1,..., \ell\}$,
one can find a sequence of uniformly bounded piecewise constant functions
$\{ F_{j,n}: n\geq 1\}$ such that
$\| F_{j,n} - \delta^{-1}\nabla (x_{i_j})_\eps \| \to 0$ as $n$ tends to infinity; denote the time instants 
at which the function   $F_{j,n}$ jumps by~$t_k^{j,n}$, $1 \le k \le N_{j,n}$.

\item[(ii)] By dominated convergence and Ito isometry for Wiener integral, 
the following $L^p(\Omega)$-convergence of Gaussians holds:
\[
\sum_{k=1}^{N_{j,n}} F_{j,n}(t_k^{j,n}) 
 \Big[ B\bclr{t_{k+1}^{j,n}}     - B\bclr{t_k^{j,n}}    \Big] 
 \xrightarrow[n\to\infty]{L^p(\Omega)}
 \frac{1}{\d} \int_0^T \nabla (x_{i_j})_\eps(s) dB(s)
 \]
for any finite $p\geq 1$.

\end{itemize}

Then, defining $w_\eps^\ast(s) := w_\eps(s) - w_\eps(0)$ and writing $\Pi_t v = v(t)$ for 
the canonical evaluation map of $v\in C([0,T]; \R^d)$ at time $t$,
and recalling the definitions of $\Psi_B(w)$ and~$\Psi_\Theta(w)$ in \eqref{Y2},
 the expectation in \eqref{pf:eq1} can be rewritten (noting that $\Psi_B(w)=\Psi_B(w^\ast)$) as

\noi
\begin{align}
\begin{aligned}
& \IE\left[  h(\delta B + \d \Theta)   e^{\Psi_B(w) + \Psi_\Theta(w)}
 \prod_{j=1}^\ell   D \Psi(w) [x_{i_j}]   \right]  \\
&= \lim_{n\to\infty}
 \IE\bigg\{  h(\delta B + \d \Theta)    e^{\Psi_B(w^\ast) + \Psi_\Theta(w)}
 \prod_{j=1}^\ell     \bigg[   \bigg(  \sum_{k=1}^{N_{j,n}} F_{j,n}(t_k^{j,n}) 
 \\
&\qquad \times  \Big[ \Pi_{t_{k+1}^{j,n} } (B - \d^{-1}w^\ast_\eps  )  -   
  \Pi_{t_k^{j,n}} (B - \d^{-1}w^\ast_\eps  )      \Big]  \bigg) 
- \frac{1}{\d}  \Big\langle \Theta -\d^{-1}w_\eps(0),  (x_{i_j})_\eps(0) \Big\rangle 
    \bigg]    \bigg\} \\
&=    \lim_{n\to\infty}
 \IE\bigg\{  h(w_\eps^\ast + \delta B + \d \Theta)   e^{ \Psi_\Theta(w)}
 \prod_{j=1}^\ell     \bigg[   \bigg(  \sum_{k=1}^{N_{j,n}} F_{j,n}(t_k^{j,n}) 
 \Big[ \Pi_{t_{k+1}^{j,n} } B    -     \Pi_{t_k^{j,n}} B     \Big]  \bigg)  \\
&\qquad  - \frac{1}{\d}  \Big\langle \Theta -\d^{-1}w_\eps(0),  (x_{i_j})_\eps(0) \Big\rangle 
    \bigg]    \bigg\} \\
&=  
 \IE\bigg\{  h(w_\eps^\ast + \delta B + \d \Theta)   e^{ \Psi_\Theta(w)}
 \prod_{j=1}^\ell     \bigg[    \frac{1}{\d} \int_0^T \nabla (x_{i_j})_\eps(s) dB(s) \\
&\qquad  - \frac{1}{\d}  \Big\langle \Theta-\d^{-1}w_\eps(0),  (x_{i_j})_\eps(0) \Big\rangle 
    \bigg]    \bigg\} \\
&=    
 \IE\bigg\{  h(w_\eps + \delta B + \d \Theta)    
 \prod_{j=1}^\ell  \widehat{D}\Psi(w)[x_{i_j} ] \bigg\},
\end{aligned}
\label{pf:eq2}
\end{align}

\noi
The first and third equalities in \eqref{pf:eq2} follow from~(ii), and 
the second follows by applying the Cameron--Martin change of measure formula
for the Brownian motion~$B$ with respect to $B+\delta^{-1}w_\eps^\ast$. 
The final equality in  \eqref{pf:eq2} follows from 
the same change of measure as in \eqref{CM:eq1}.

\noi
Therefore, we have verified the equivalence between 
 \eqref{eq:kk} 
and   \eqref{formula_kk}.
\qedhere

\end{proof}

\end{document}